\documentclass[a4paper,12pt]{amsart}
\usepackage[english]{babel}
\usepackage{amsmath,amsthm,amssymb,amsfonts}
\usepackage{multicol}
\usepackage[titletoc]{appendix}
\usepackage{soul}

\usepackage[pdftex]{color}
\usepackage[bookmarks=true,hyperindex,pdftex,colorlinks, citecolor=blue,linkcolor=blue, urlcolor=blue]{hyperref}
\usepackage[dvipsnames]{xcolor}
\usepackage{mathtools}
\usepackage{graphicx}
\usepackage{pgf}
\usepackage{tikz-cd}
\usepackage[normalem]{ulem}

\usepackage{tikz}
\usetikzlibrary{mindmap,backgrounds, calc}
\usetikzlibrary{matrix,chains,scopes,positioning,arrows,fit}
\usetikzlibrary{positioning, shapes, patterns}
\usetikzlibrary{automata} 
\usetikzlibrary{shapes.geometric, arrows, arrows.meta}
\usetikzlibrary{positioning,decorations.markings}

\parskip=0.5ex
\textwidth=16cm
\hoffset=-1.5cm

\newtheorem{theorem}{Theorem}[section]
\newtheorem{lemma}[theorem]{Lemma}

\newtheorem{proposition}[theorem]{Proposition}

\theoremstyle{definition}
\newtheorem{definition}[theorem]{Definition}

\newtheorem{remark}[theorem]{Remark}
\numberwithin{equation}{section}

\newtheorem*{theorem0}{Theorem}
\newtheorem{theoremA}{Theorem} 
\newtheorem{theoremB}{Theorem}
\newtheorem{theoremC}{Theorem}

\newcommand{\N}{\mathbb{N}}
\newcommand{\K}{\mathbb{K}}

\newcommand{\Lin}{\mathcal{L}}

\DeclareMathOperator{\dist}{dist\,}

\DeclareMathOperator{\co}{co}
\DeclareMathOperator{\Id}{Id}

\DeclareMathOperator{\re}{Re}

\newcommand{\nn}[1]{{\left\vert\kern-0.25ex\left\vert\kern-0.25ex\left\vert #1 
		\right\vert\kern-0.25ex\right\vert\kern-0.25ex\right\vert}}

\renewcommand{\geq}{\geqslant}
\renewcommand{\leq}{\leqslant}

\newcommand{\norm}[1]{\left\Vert#1\right\Vert}

\newcommand{\NA}{\operatorname{NA}}

\newcommand{\spann}{\operatorname{span}}

\newcommand{\pten}{\ensuremath{\widehat{\otimes}_\pi}}

\newcommand{\eps}{\varepsilon}
\newcommand{\n}{\left\Vert\cdot\right\Vert}

\begin{document}
\setcounter{tocdepth}{1}


\title{On the existence of non-norm-attaining operators}

\author[Dantas]{Sheldon Dantas}
\address[Dantas]{Departament de Matemàtiques and Institut Universitari de Matemàtiques i Aplicacions de Castelló (IMAC), Universitat Jaume I, Campus del Riu Sec. s/n, 12071 Castelló, Spain. \newline
	\href{http://orcid.org/0000-0001-8117-3760}{ORCID: \texttt{0000-0001-8117-3760} } }
\email{\texttt{dantas@uji.es}}

\author[Jung]{Mingu Jung}
\address[Jung]{Department of Mathematics, POSTECH, Pohang 790-784, Republic of Korea \newline
	\href{http://orcid.org/0000-0000-0000-0000}{ORCID: \texttt{0000-0003-2240-2855} }}
\email{\texttt{jmingoo@postech.ac.kr}}

\author[Mart\'inez-Cervantes]{Gonzalo Mart\'inez-Cervantes}
\address[Mart\'inez-Cervantes]{Universidad de Murcia, Departamento de Matem\'{a}ticas, Campus de Espinardo 30100 Murcia, Spain
	\newline
	\href{http://orcid.org/0000-0002-5927-5215}{ORCID: \texttt{0000-0002-5927-5215} } }	

\email{gonzalo.martinez2@um.es}

\thanks{S. Dantas was supported by the project OPVVV CAAS CZ.02.1.01/0.0/0.0/16\_019/0000778 and by the Estonian Research Council grant PRG877. The second author was supported by NRF (NRF-2019R1A2C1003857). The third author was partially supported by \textit{Fundaci\'on S\'eneca} [20797/PI/18], \textit{Agencia Estatal de Investigaci\'on} [MTM2017-86182-P, cofunded by ERDF, EU] and by the European Social Fund (ESF) and the Youth European Initiative (YEI) under \textit{Fundaci\'on S\'eneca} [21319/PDGI/19].
	}

\keywords{James theorem; norm-attatining operators; compact approximation property}

\subjclass[2010]{46B20, 46B10, 46B28}

\begin{abstract} In this paper we provide necessary and sufficient conditions for the existence of non-norm-attaining operators in $\mathcal{L}(E, F)$. By using a theorem due to Pfitzner on James boundaries, we show that if there exists a relatively compact set $K$ of $\mathcal{L}(E, F)$ (in the weak operator topology) such that $0$ is an element of its closure (in the weak operator topology) but it is not in its norm closed convex hull, then we can guarantee the existence of an operator which does not attain its norm. This allows us to provide the following generalization of results due to Holub and Mujica. If $E$ is a reflexive space, $F$ is an arbitrary Banach space, and the pair $(E, F)$ has the bounded compact approximation property, then the following are equivalent:
\begin{itemize} 
\item[(i)]$\mathcal{K}(E, F) = \mathcal{L}(E, F)$;
\item[(ii)] Every operator from $E$ into $F$ attains its norm;
\item[(iii)] $(\mathcal{L}(E,F), \tau_c)^* = (\mathcal{L}(E, F), \n)^*$;
\end{itemize}
where $\tau_c$ denotes the topology of compact convergence. We conclude the paper presenting a characterization of the Schur property in terms of norm-attaining operators.
 
\end{abstract}
\maketitle


\section{Introduction}

The famous James theorem states that a Banach space $E$ is reflexive if and only if every bounded linear functional on $E$ attains its norm. By using this characterization, one can check that if every bounded linear operator from a Banach space $E$ into a Banach space $F$ is norm-attaining, then $E$ must be reflexive, whereas the range space $F$ is not forced to be reflexive in general. Indeed, every bounded linear operator from a reflexive space into a Banach space which satisfies the Schur property is compact and any compact operator from a reflexive space into an arbitrary Banach space is norm-attaining. Therefore, it seems natural to wonder whether it is possible to guarantee the existence of a non-norm-attaining operator from the existence of a non-compact operator. This brings us back to the 70's when J.R.~Holub proved that this is, in fact, true under approximation property assumptions (see \cite[Theorem 2]{H}). Almost thirty years later, J.~Mujica improved Holub's result by using the compact approximation property (see \cite[Theorem 2.1]{Mujica}), which is a weaker assumption than the approximation property. However, both results require the reflexivity on both domain and range spaces, so the following question arises naturally.

\begin{center}
	{\it Given a reflexive space $E$ and an arbitrary Banach space $F$, under which assumptions we may guarantee the existence of non-norm-attaining operators in $\mathcal{L}(E, F)$?}
\end{center}

Coming back to Holub and Mujica's results, we would like to highlight what they proved in the direction of the above question. For a background on necessary definitions and notations, we refer the reader to Section \ref{preliminaries}. In what follows, $\tau_c$ denotes the topology of compact convergence and $\|\cdot\|$ denotes the norm topology in $\mathcal{L}(E, F)$. 

\begin{theorem0}[\mbox{\cite[Theorem 2]{H} and \cite[Theorem 2.1]{Mujica}}] Let $E$ and $F$ be both reflexive spaces. 
	\begin{itemize}
		\setlength\itemsep{0.3em} 
		\item[(a)] If $\mathcal{L}(E,F)$ is non-reflexive, there is a non-norm-attaining operator $S \in \mathcal{L}(E,F)$.
		\item[(b)] If $E$ or $F$ has the (compact) approximation property, then the following statements are equivalent.
		\begin{itemize} 
				\setlength\itemsep{0.3em} 
		\item[(i)] There exists a non-norm-attaining operator $S \in \mathcal{L}(E,F)$;
		\item[(ii)] $\mathcal{L}(E,F)\neq \mathcal{K}(E,F)$;
		\item[(iii)] $\mathcal{L}(E,F)$ is non-reflexive;
		\item[(iv)] $(\mathcal{L}(E,F), \|\cdot\|)^* \neq (\mathcal{L} (E,F), \tau_c)^*$.
		\end{itemize} 
		\end{itemize}
\end{theorem0}
\noindent
The proof of the above result relies on the fact that if $F$ is a reflexive space, then $\mathcal{L}(E, F)$ is the dual space of the projective tensor product $E \pten F^*$. However, if the range space $F$ is non-reflexive, then $\mathcal{L}(E,F)$ is always non-reflexive (see, for instance, \cite{Ruckle}). 

As a way of extending the above results to the case of non-reflexive range spaces, we borrow some of the techniques used by R.C.~James (see \cite{J, J1}). As a matter of fact, one of his results \cite[Theorem 1]{J1} implies that a separable Banach space $E$ is non-reflexive if and only if given $0 < \theta < 1$, there exists a sequence $(x_n^*)$ in $B_{E^*}$ such that $x_n^* \stackrel{w^*}{\longrightarrow} 0$ and $\dist (0, \co \{ x_n^* : n \in \N \}) > \theta$, which in turn is equivalent to the existence of a relatively weak* compact set $K \subseteq B_{E^*}$ such that $0 \in \overline{K}^{w^*}$ and $0 \not\in \overline{\co}^{\norm{\cdot}}(K)$. This motivates us to define the following property.

\begin{definition} \label{definitionpropertystar} We say that a pair $(E, F)$ of Banach spaces has the {\it James property} if there exists a relatively WOT-compact set $K \subseteq \mathcal{L}(E, F)$ such that $0 \in \overline{K}^{WOT}$ and $0 \not\in \overline{\co}^{\norm{\cdot}}(K)$. 
\end{definition}

We will prove that the James property is a sufficient condition to guarantee the existence of an operator which does not attain its norm, which is our first aim in the present paper.

\begin{theoremA} \label{theoremA} Let $E$ and $F$ be Banach spaces. If the pair $(E, F)$ has the James property, then there exists a non-norm-attaining operator in $\mathcal{L}(E, F)$. 
\end{theoremA}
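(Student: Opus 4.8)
The plan is to exploit Pfitzner's theorem on James boundaries, as advertised in the abstract. Suppose, toward a contradiction, that the pair $(E,F)$ has the James property but every operator in $\mathcal{L}(E,F)$ attains its norm. Let $K \subseteq \mathcal{L}(E,F)$ be a relatively WOT-compact set with $0 \in \overline{K}^{WOT}$ and $0 \notin \overline{\co}^{\norm{\cdot}}(K)$. Set $C := \overline{\co}^{\norm{\cdot}}(K)$, a closed bounded convex subset of $\mathcal{L}(E,F)$, and put $\delta := \dist(0, C) > 0$. The key observation is that every element of $\mathcal{L}(E,F)$, viewed through the duality $\mathcal{L}(E,F) \hookrightarrow C(B_{E^{**}} \times B_{F^*})$ (or more precisely via the natural bilinear forms $\langle T, (x,y^*)\rangle = y^*(Tx)$ on $B_E \times B_{F^*}$), is continuous for the pointwise topology and WOT-convergence on bounded sets corresponds exactly to pointwise convergence on $B_E \times B_{F^*}$; since $K$ is relatively WOT-compact, the set $L := \{\, (x,y^*) \mapsto y^*(Tx) : T \in K \,\}$ is a relatively pointwise-compact (equivalently, by Grothendieck, relatively weakly compact after identification) family. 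The first step, then, is to realize $\mathcal{L}(E,F)$ as a closed subspace $X$ of $\ell_\infty(B_E \times B_{F^*})$ — or better, of $C(B_{E^{**}})$-valued functions — in such a way that $K$ sits inside $X$ and $0$ belongs to the pointwise closure of $K$ but not to its norm-closed convex hull.

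Next, I would invoke the hypothesis ``every operator attains its norm'' to identify a James boundary. Concretely, for $T \in \mathcal{L}(E,F)$ with $\norm{T}=1$, norm-attainment means there is $x \in S_E$ and $y^* \in S_{F^*}$ with $y^*(Tx) = \norm{T}$; hence the set $B := \{\, \pm\, \widehat{(x,y^*)} : x \in B_E,\ y^* \in B_{F^*} \,\} \subseteq B_{X^*}$ (the evaluation functionals) is a \emph{James boundary} for $X = \mathcal{L}(E,F)$: every element of $X$ attains its norm on $B$. Now Pfitzner's theorem (the quantitative boundary version of Rainwater--Simons) applies: if $B$ is a James boundary for a Banach space $X$, then for every bounded sequence — or, in the appropriate compact formulation, every bounded subset — $A \subseteq X$ one has $\overline{\co}^{\norm{\cdot}}(A) = \overline{\co}^{\,\sigma(X, B)}(A)$; in particular the norm-closed convex hull of a set equals its closed convex hull in the topology of pointwise convergence on $B$. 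Applying this with $A = K$ and using that $0$ is in the WOT-closure of $K$ — hence in the $\sigma(X,B)$-closure of $\co(K)$, since WOT-convergence dominates pointwise convergence on evaluation functionals $\widehat{(x,y^*)}$ — we conclude $0 \in \overline{\co}^{\norm{\cdot}}(K) = C$, contradicting $\dist(0,C) = \delta > 0$.

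The main obstacle I anticipate is the precise matching between WOT on $\mathcal{L}(E,F)$ and the pointwise topology $\sigma(X, B)$ that Pfitzner's theorem delivers: WOT is the topology of pointwise convergence against the functionals $T \mapsto y^*(Tx)$ for $x \in E$, $y^* \in F^*$, which is \emph{exactly} $\sigma(X, B)$ up to the scaling/sign and the passage from $B_E$ to all of $E$ (harmless by homogeneity), so one must check that the evaluation functionals $\widehat{(x,y^*)}$ genuinely span (or at least norm) enough of $X^*$ and that $B$ is norming — which it is, since $\norm{T} = \sup\{|y^*(Tx)| : x \in B_E, y^* \in B_{F^*}\}$. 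A secondary technical point is ensuring $K$ is norm-bounded: relative WOT-compactness does not a priori give norm-boundedness, but one may replace $K$ by $K \cap nB_{\mathcal{L}(E,F)}$ for suitable $n$, or observe that a relatively WOT-compact set whose WOT-closure is to be used is necessarily norm-bounded by the uniform boundedness principle applied pointwise. Finally, one should state the version of Pfitzner's theorem being used in the exact form needed (boundary $\Rightarrow$ norm-closed convex hull equals $\sigma(X,B)$-closed convex hull for bounded sets), which is what the authors presumably record in the preliminaries section; with that in hand the argument above is essentially a two-line contradiction.
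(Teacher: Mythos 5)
Your overall strategy coincides with the paper's: assume every operator attains its norm, observe that the evaluation functionals $x\otimes y^*$ ($x\in S_E$, $y^*\in S_{F^*}$) then form a James boundary of $\mathcal{L}(E,F)$, and use Pfitzner's theorem together with the set $K$ witnessing the James property to reach a contradiction; the norm-boundedness point you raise is settled exactly as in the paper, by the uniform boundedness principle. The one genuine flaw is the form in which you invoke Pfitzner's theorem: the identity $\overline{\co}^{\norm{\cdot}}(A)=\overline{\co}^{\sigma(X,B)}(A)$ for \emph{every} bounded set $A$ is not what Pfitzner proved, and it should not be expected without a compactness hypothesis --- the sequential Rainwater--Simons theorem yields such conclusions only for bounded sequences, and the whole difficulty of the boundary problem was precisely the passage from sequences to general bounded sets. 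Pfitzner's theorem states that a bounded subset of $X$ that is compact in $\sigma(X,B)$ is weakly compact. Fortunately, your own first paragraph already records the needed compactness: $K$ is relatively $\sigma(X,B)$-compact, since $\sigma(X,B)$ agrees with the WOT by homogeneity. The correct assembly is then one line: $\overline{K}^{WOT}$ is bounded and $\sigma(X,B)$-compact, hence weakly compact by Pfitzner; on a weakly compact set the coarser Hausdorff topology WOT coincides with the weak topology, so $0\in\overline{K}^{WOT}=\overline{K}^{w}\subseteq\overline{\co}^{w}(K)=\overline{\co}^{\norm{\cdot}}(K)$ by Mazur's theorem, contradicting the James property. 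With this substitution your argument becomes exactly the paper's proof.
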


Next, we prove that $(\mathcal{L}(E,F), \|\cdot\|)^*$ does not coincide with $(\mathcal{L}(E,F),\tau_c)^*$ whenever a pair $(E, F)$ satisfies the James property (see Proposition \ref{duals_of_L(E,F)}). From this, we can see that whenever the pair $(E, F)$ has the James property, then the Banach space $\mathcal{L}(E, F)$ cannot be reflexive due to \cite[Lemma 2.3]{Mujica}.

We observe, for a reflexive space $E$ and an arbitrary Banach space $F$, that (1) the unit ball of $\mathcal{K}(E,F)$ is closed in the strong operator topology if and only if it is sequentially closed in this topology (see Lemma \ref{LemmaSequentialClosureOfK(E,F)}) and (2) $\mathcal{K}(E,F) = \mathcal{L}(E,F)$ implies that $(\mathcal{L}(E,F), \|\cdot\|)^* = (\mathcal{L}(E,F), \tau_c)^*$ by using the result \cite[Theorem 1]{FS} due to M. Feder and P. Saphar. 
Besides that, we consider the concept of the (bounded) compact approximation property for a pair of Banach spaces in the way as it is done in \cite{Bonde} and prove that $\mathcal{K}(E,F) = \mathcal{L} (E,F)$ when either (3) the norm-closed unit ball of $\mathcal{K}(E,F)$ is closed in the strong operator topology or (4) $(\mathcal{L}(E,F), \| \cdot\|)^* = (\mathcal{L}(E,F), \tau_c)^*$ under the just mentioned approximation property assumption (see Lemma \ref{LemmaGeneralizationMujica}). 
Combining (1)-(4) together with Theorem \ref{theoremA}, we get a generalization of Holub and Mujica's results, where we no longer need reflexivity on the target space $F$.

\begin{theoremB} \label{theoremB} Let $E$ be a reflexive space and $F$ be an arbitrary Banach space. Consider the following conditions.
	\begin{itemize}
	\setlength\itemsep{0.3em} 
		\item[(a)]$\mathcal{K}(E, F) = \mathcal{L}(E, F)$.
		\item[(b)] Every operator from $E$ into $F$ attains its norm.
		
		\item[(c)] The unit ball $B_{\mathcal{K}(E, F)}$ is closed in the strong operator topology.
		\item[(d)] $(\mathcal{L}(E,F), \tau_c)^* = (\mathcal{L}(E, F), \n)^*$.
	\end{itemize}
	Then, we always have (a) $\Longrightarrow$ (b) $\Longrightarrow$ (c) and (a) $\Longrightarrow$ (d) $\Longrightarrow$ (c). Additionally, if the pair $(E,F)$ has the bounded compact approximation property, then (c) $\Longrightarrow$ (a) and therefore all the statements are equivalent. 
\end{theoremB}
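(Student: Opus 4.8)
The plan is to establish the implications in the order (a) $\Rightarrow$ (b) $\Rightarrow$ (c), then (a) $\Rightarrow$ (d) $\Rightarrow$ (c), and finally the reverse implication (c) $\Rightarrow$ (a) under the bounded compact approximation property hypothesis. For (a) $\Rightarrow$ (b): if $\mathcal{K}(E,F) = \mathcal{L}(E,F)$, then every operator $T \in \mathcal{L}(E,F)$ is compact, and since $E$ is reflexive, $B_E$ is weakly compact, so $T(B_E)$ is a norm-compact set; the norm function attains its supremum on a compact set, hence $T$ attains its norm. For (a) $\Rightarrow$ (d): this is precisely point (2) flagged in the discussion above, using \cite[Theorem 1]{FS} of Feder and Saphar — when $\mathcal{K}(E,F)=\mathcal{L}(E,F)$, the space $\mathcal{L}(E,F)$ coincides isometrically with $\mathcal{K}(E,F)$, whose dual is known to be identified in a way that makes the $\tau_c$-continuous and norm-continuous functionals agree.

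For the two implications landing on (c), I would argue contrapositively via Theorem \ref{theoremA} and the James property. Suppose (c) fails, i.e.\ $B_{\mathcal{K}(E,F)}$ is not closed in the strong operator topology; by Lemma \ref{LemmaSequentialClosureOfK(E,F)} (point (1)) it is then not sequentially SOT-closed, so there is a sequence $(T_n)$ in $B_{\mathcal{K}(E,F)}$ converging in SOT to some $T \in \mathcal{L}(E,F)$ with $\norm{T} > 1$. The idea is to manufacture from $(T_n)$ (after translating, say by considering $T_n - T$ or a suitable rescaling) a relatively WOT-compact set $K \subseteq \mathcal{L}(E,F)$ witnessing the James property: the sequence itself, being bounded and living in $\mathcal{L}(E,F)$ with $E$ reflexive, has good weak-operator-compactness properties, $0$ lies in its WOT-closure (by the SOT convergence and a translation), while the norm gap forces $0 \notin \overline{\co}^{\norm{\cdot}}(K)$. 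Once the James property is in hand, Theorem \ref{theoremA} produces a non-norm-attaining operator, so (b) fails; this gives (b) $\Rightarrow$ (c). For (d) $\Rightarrow$ (c), I would use the same contrapositive skeleton together with Proposition \ref{duals_of_L(E,F)}, which says the James property forces $(\mathcal{L}(E,F),\n)^* \neq (\mathcal{L}(E,F),\tau_c)^*$; negating (c) yields the James property as above, hence (d) fails.

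For the final implication (c) $\Rightarrow$ (a) under the bounded compact approximation property of the pair $(E,F)$: this is exactly point (3) in the outline, and I would extract it from Lemma \ref{LemmaGeneralizationMujica}. The argument should run as follows: given $T \in \mathcal{L}(E,F)$, the bounded compact approximation property supplies a bounded net $(S_\alpha)$ of compact operators on (a suitable factorization through) $E$ with $S_\alpha \to \Id$ pointwise, so that $T S_\alpha$ is a bounded net in $\mathcal{K}(E,F)$ converging to $T$ in SOT; since $B_{\mathcal{K}(E,F)}$ (appropriately rescaled) is SOT-closed by (c), the limit $T$ lies in $\mathcal{K}(E,F)$, giving $\mathcal{K}(E,F) = \mathcal{L}(E,F)$.

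The main obstacle I anticipate is the contrapositive step: carefully producing the James-property witness set $K$ from the failure of SOT-closedness of $B_{\mathcal{K}(E,F)}$. One must verify simultaneously that $K$ is relatively WOT-compact (here reflexivity of $E$ is essential — it makes $B_E$ weakly compact and controls the weak-operator topology on bounded sets), that $0$ genuinely belongs to $\overline{K}^{WOT}$ rather than merely to some weaker closure, and that the norm separation $0 \notin \overline{\co}^{\norm{\cdot}}(K)$ survives — the last point is where the quantitative gap $\norm{T} > 1$ must be exploited, presumably by choosing $K$ to consist of differences $T_n - T$ scaled so that every convex combination stays norm-bounded away from $0$. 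Threading all three requirements through a single choice of $K$, while keeping it bounded so that WOT-relative-compactness is automatic, is the delicate part; the rest of the proof is assembling already-available lemmas.
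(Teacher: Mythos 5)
Your overall architecture matches the paper's: (a)$\Rightarrow$(b) directly from compactness plus weak compactness of $B_E$, (a)$\Rightarrow$(d) via Feder--Saphar (Proposition \ref{holubtypetheoremfirstpart}), both implications into (c) by contraposition through the James property together with Theorem \ref{theoremA} and Proposition \ref{duals_of_L(E,F)}, and (c)$\Rightarrow$(a) from the BCAP (Lemma \ref{LemmaGeneralizationMujica}). However, the step you yourself flag as delicate contains a genuine error. If $(T_n)\subseteq B_{\mathcal{K}(E,F)}$ converges to $T$ in the SOT, then $\|T(x)\|=\lim_n\|T_n(x)\|\leq 1$ for every $x\in B_E$, so $\|T\|\leq 1$; the ``quantitative gap $\|T\|>1$'' on which you propose to base the separation $0\notin\overline{\co}^{\norm{\cdot}}(K)$ never occurs. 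The only way the SOT-limit $T$ can fail to lie in $B_{\mathcal{K}(E,F)}$ is by failing to be compact. The correct separation argument is qualitative, not quantitative: taking $K=\{T_n-T : n\in\N\}$, one has $\co(K)\subseteq B_{\mathcal{K}(E,F)}-T$, which is norm-closed and convex and does not contain $0$ precisely because $T\notin B_{\mathcal{K}(E,F)}$. This is exactly the content of Lemma \ref{oldcharacterizationJamesProperty}, which applies to any norm-closed convex set that is not SOT-sequentially closed. Relative WOT-compactness of $K$ is likewise automatic and needs no reflexivity: $K\cup\{0\}$ is the range of a WOT-convergent sequence together with its limit, hence WOT-compact.

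Two smaller inaccuracies. First, reflexivity of $E$ does enter this contrapositive step, but through Lemma \ref{LemmaSequentialClosureOfK(E,F)} (the separable complementation property is what lets one pass from ``not SOT-closed'' to ``not SOT-sequentially closed''), not through any weak-operator compactness of $K$ as you suggest. Second, for (c)$\Rightarrow$(a) your mechanism (composing $T$ with compact approximants of the identity on $E$) is the argument appropriate when $E$ itself has the BCAP; the hypothesis here is that the \emph{pair} $(E,F)$ has the BCAP, which by definition directly places every $T\in\mathcal{L}(E,F)$ in $\lambda\overline{B_{\mathcal{K}(E,F)}}^{\tau_c}\subseteq\lambda\overline{B_{\mathcal{K}(E,F)}}^{SOT}=\lambda B_{\mathcal{K}(E,F)}$ for some $\lambda\geq 1$, with no factorization through $E$ needed.
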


The following diagram summarizes most of the results included in this article. In what follows, $E$ is supposed to be any reflexive space and $F$ is any arbitrary Banach space. Moreover, BCAP stands for the bounded compact approximation property for the pair $(E, F)$ (see Definition \ref{BCAP} in Section \ref{preliminaries} below).

\vspace{0.5cm}

\begin{center}
	\begin{tikzpicture}[scale=0.95, baseline=0cm]
	\tikzstyle{caixa} = [rectangle, rounded corners, minimum width=.7cm, minimum height=.7cm, text centered, draw=black]
	\tikzstyle{dfletxa} = [double equal sign distance,-implies, shorten >= 2pt, , shorten <= 2pt]
	\tikzstyle{dfletxadotted} = [double equal sign distance, dashed, -implies, shorten >= 2pt, , shorten <= 2pt]
	
	\node (A) at (0,0) [caixa] {$\scriptstyle 
		\overline{B_{\mathcal{K}(E, F)}}^{SOT} \not= B_{\mathcal{K}(E, F)}
		$};
	\node (B) at (0,-1.5) [caixa] {$\scriptstyle 
		(E, F) \ \text{has the James property}
		$};
	\node (C) at (0,-3) [caixa] {$\scriptstyle 
		\mathcal{L}(E, F) \not= \mathrm{NA}(E, F) 
		$};
	\node (D) at (-6,-3) [caixa] {$\scriptstyle 
		\mathcal{L}(E, F) \not= \mathcal{K}(E, F)
		$};
	\node (E) at (6,-1.5) [caixa] {$\scriptstyle 
		(\mathcal{L}(E, F), \|\cdot\|)^* \not= (\mathcal{L}(E, F), \tau_c)^*
		$};
	\node (F) at (6,-3) [caixa] {$\scriptstyle 
		\mathcal{L}(E, F) \ \text{is non-reflexive}
		$};
	
	\draw [dfletxa] (A) -- (B);
	\draw [dfletxa] (B) -- (C);
	\draw [dfletxa] (C) -- (D);
	\draw [dfletxa] (B) -- (E);
	\draw [dfletxa] (E) -- (F);
	
	%
	
	\draw[dfletxadotted, ->, >=implies, rounded corners] (D.north)
	-- (-6,-2.0)
	-- node[right, pos=1.1] {$ \scriptstyle \text{If } (E,F) \text{ has the BCAP}$} (-6,0)
	-- (A.west);
	
	\draw[dfletxadotted, rounded corners] (F.south)
	-- (6,-4)
	-- node[above, pos=.8] {$\scriptstyle \text{ If } F \text{ is reflexive}$} (2.5,-4)
	-- (0,-4)
	-- (C.south);
	\end{tikzpicture}
\end{center}

\vspace{0.5cm}

Finally, as an application of Theorem \ref{theoremB}, we connect the Schur property with the case where every operator attains its norm, and obtain the following characterization (see Theorem \ref{charcterizations_Schur}).

\begin{theoremC} \label{theoremC} Let $F$ be a Banach space. The following statements are equivalent.
	
	\begin{itemize}
		\setlength\itemsep{0.3em} 
		\item[(a)] $F$ has the Schur property.
		\item[(b)] $\mathcal{K}(E, F) = \mathcal{L}(E, F)$ for every reflexive space $E$.
		\item[(c)] $\NA(E, F) = \mathcal{L}(E, F)$ for every reflexive space $E$.
		\item[(d)] $\mathcal{K} (G, F) = \mathcal{L} (G, F)$ for every reflexive space $G$ with basis.
		\item[(e)] 		$\NA (G,F) = \mathcal{L} (G,F)$ for every reflexive space $G$ with basis.
	\end{itemize}
\end{theoremC}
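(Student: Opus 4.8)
The plan is to prove Theorem~\ref{theoremC} as a cycle of implications, leaning heavily on Theorem~\ref{theoremB} and on the classical facts about the Schur property and reflexive domains recalled in the introduction. First I would establish $(a)\Rightarrow(b)$: this is exactly the observation already made in the introduction, namely that every bounded operator from a reflexive space $E$ into a space $F$ with the Schur property is compact. The argument is that if $T\in\mathcal L(E,F)$ and $(x_n)$ is any bounded sequence in $E$, then by reflexivity we may pass to a weakly convergent subsequence $x_{n_k}\rightharpoonup x$; since $T$ is weak-to-weak continuous, $Tx_{n_k}\rightharpoonup Tx$ in $F$, and the Schur property upgrades this to norm convergence $Tx_{n_k}\to Tx$. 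Hence $T$ maps bounded sets to relatively compact sets, i.e. $T\in\mathcal K(E,F)$.

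Next, $(b)\Rightarrow(c)$ is immediate: any compact operator from a reflexive space into an arbitrary Banach space attains its norm (take a maximizing sequence in $B_E$, extract a weakly convergent subsequence $x_n\rightharpoonup x_0\in B_E$, and use compactness plus weak-to-weak continuity to get $\|Tx_0\|=\lim\|Tx_n\|=\|T\|$), so $(b)$ forces $\mathcal L(E,F)=\NA(E,F)$ for every reflexive $E$. The implications $(b)\Rightarrow(d)$ and $(c)\Rightarrow(e)$ are trivial restrictions of the quantifier from all reflexive $E$ to reflexive spaces with a basis. The real content sits in closing the loop, for which I would prove $(e)\Rightarrow(a)$ and $(d)\Rightarrow(a)$ — or more economically, prove $(e)\Rightarrow(a)$ and note $(d)\Rightarrow(e)$ follows from the $(a)\Leftrightarrow\dots$ equivalences once $(a)$ is reached, but cleanly it is best to show both $(d)$ and $(e)$ each imply $(a)$ directly via the contrapositive.

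So suppose $F$ does \emph{not} have the Schur property. Then there is a weakly null sequence $(y_n)$ in $F$ with $\|y_n\|=1$ for all $n$ (normalize a weakly null non-norm-null sequence and pass to a subsequence). The goal is to manufacture a reflexive space $G$ \emph{with a basis} and an operator in $\mathcal L(G,F)$ that is non-compact (to defeat $(d)$) and, using Theorem~\ref{theoremB}, non-norm-attaining (to defeat $(e)$). The natural candidate for $G$ is a space like $\ell_2$, or more robustly a suitable reflexive sequence space adapted to $(y_n)$: define $G$ so that the operator $T:G\to F$ sending the $n$-th basis vector $e_n$ to $y_n$ is bounded. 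If $(y_n)$ happens to be weakly null and bounded, one can always find a reflexive Banach space with a $1$-unconditional basis $(e_n)$ on which $e_n\mapsto y_n$ extends to a bounded operator — for instance, taking $G$ to be the completion of $c_{00}$ under $\|\sum a_n e_n\|_G := \sup_{F^*\text{-stuff}}$, or simply invoking a standard factorization: by the Davis–Figiel–Johnson–Pe\l czy\'nski construction applied to the (relatively weakly compact, as a weakly null sequence together with its limit) set $\{y_n\}$, the inclusion factors through a reflexive space $G$ with a basis, and the factoring operator $T:G\to F$ has $\{y_n\}$ in its range, hence is non-compact because $(y_n)$ has no norm-convergent subsequence. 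This $T$ witnesses $\mathcal K(G,F)\neq\mathcal L(G,F)$, killing $(d)$. For $(e)$, since $E:=G$ is reflexive and $\mathcal K(G,F)\neq\mathcal L(G,F)$, Theorem~\ref{theoremB}'s chain $(a)\Leftrightarrow(b)$ (valid here because a reflexive space with a basis has the BCAP, so the pair $(G,F)$ has the BCAP) gives $\NA(G,F)\neq\mathcal L(G,F)$, killing $(e)$.

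The main obstacle I anticipate is the construction in the previous paragraph: one needs the reflexive space $G$ to simultaneously \emph{have a basis} and \emph{carry a non-compact operator into $F$}, and to verify that the pair $(G,F)$ has the BCAP so that Theorem~\ref{theoremB} applies. The cleanest route is to take $G$ to be $\ell_2$ (or any reflexive space with an unconditional basis) whenever $(y_n)$ admits a basic subsequence on which $e_n\mapsto y_n$ is bounded — which one can arrange by Bessaga–Pe\l czy\'nski selection if $(y_n)$ is not norm-null — and otherwise to fall back on the DFJP factorization, checking that the reflexive intermediate space produced there can be taken with a basis (it can, since it embeds complementably in a space with a basis, or one passes to a further reflexive space with a basis containing it). Once $G$ is in hand with $\mathcal L(G,F)\neq\mathcal K(G,F)$ and the BCAP for $(G,F)$ secured (automatic, as reflexivity of $G$ plus a basis gives the metric compact approximation property, hence the BCAP for the pair), Theorem~\ref{theoremB} does the rest and both $(d)\Rightarrow(a)$ and $(e)\Rightarrow(a)$ follow, completing the cycle.
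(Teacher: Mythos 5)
Your overall architecture matches the paper's: (a)$\Rightarrow$(b)$\Rightarrow$(c), restriction of quantifiers gives (d) and (e), a reflexive space with a basis has the (metric) approximation property so the pair $(G,F)$ has the BCAP and Theorem~\ref{theoremB} converts $\NA(G,F)=\mathcal{L}(G,F)$ into $\mathcal{K}(G,F)=\mathcal{L}(G,F)$, and the loop is closed by showing that failure of the Schur property yields a reflexive space \emph{with a basis} admitting a non-compact operator into $F$. The gap is in that last construction. First, taking $G=\ell_2$ requires the map $e_n\mapsto y_n$ to satisfy an upper $\ell_2$-estimate $\left\|\sum a_n y_n\right\|\leq C\|(a_n)\|_{\ell_2}$; Bessaga--Pe{\l}czy\'nski selection gives a basic subsequence but nothing of the sort, and for a general normalized weakly null sequence this map is unbounded, so your ``cleanest route'' is simply unavailable. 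Second, your fallback claims about the DFJP intermediate space are unjustified: complemented embeddability into a space with a basis is equivalent to the bounded approximation property, which that space need not have a priori, and embedding a separable reflexive space into a reflexive space with a basis is Zippin's theorem --- a deep result which, even granted, does not let you extend the non-compact operator $S$ from the subspace to the larger space.

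The paper sidesteps all of this with a much lighter move (Proposition~\ref{main_tool_for_theorem_C}): after factoring $T=S\circ R$ through the reflexive DFJP space $E_0$ with $S\in\mathcal{L}(E_0,F)$ non-compact, choose a bounded sequence $(u_n)$ in $E_0$ such that $(S(u_n))$ has no convergent subsequence, pass to a weakly convergent subsequence $u_n\rightharpoonup u$, and set $v_n:=u_n-u$; then $(v_n)$ is weakly null, not norm null, and $(S(v_n))$ still has no convergent subsequence. Extracting a basic subsequence and putting $E:=\overline{\spann}\{v_n\}$ produces a reflexive space with a basis on which $S$ restricts to a non-compact operator, which is exactly what your argument needs. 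Replace your basis-hunting step with this restriction argument and the rest of your proof goes through as written.
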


\section{Preliminaries} \label{preliminaries}

Throughout the paper, $E$ and $F$ will be Banach spaces over {a field} $\K$, which can be either the real or complex numbers. We denote by $B_E$ and $S_E$ the closed unit ball and the unit sphere of the Banach space $E$, respectively. For a subset $K$ of $E$, $\co (K)$ (resp., $\overline{\co} (K)$) denotes the convex hull (resp., closed convex hull) of $K$. 
The space of all bounded linear operators from $E$ into $F$ is denoted by $\mathcal{L}(E, F)$. The symbol $\mathcal{K}(E, F)$ (resp., $\mathcal{W}(E,F)$) stands for the space of all compact operators (resp., weakly compact operators) from $E$ into $F$, whereas the symbol $\mathcal{F}(E, F)$ is used to denote the space of all finite-rank operators. Recall that $T \in \mathcal{L} (E,F)$ is completely continuous if $T$ sends weakly null sequences in $E$ to norm null sequences in $F$. We denote by $\mathcal{V}(E,F)$ the space of all completely continuous operators from $E$ into $F$. 
Let us denote by $\mathcal{W}_\infty (E,F)$ the space of all weakly $\infty$-compact operators from $E$ into $F$, which are introduced in \cite{SK}. A subset $C$ of a Banach space $E$ is called relatively weakly $\infty$-compact if there exists a weakly null sequence $(x_n)$ in $E$ such that $C \subset \{ \sum_{n=1}^\infty a_n x_n : (a_n) \in B_{\ell_1} \}$ and an operator $T \in \Lin (E,F)$ is said to be weakly $\infty$-compact if $T(B_E)$ is a relatively weakly $\infty$-compact subset of $F$.
Finally, let us recall that an operator $T \in \mathcal{L}(E, F)$ attains its norm or it is norm-attaining if there exists $x \in B_E$ such that $\|T(x)\| = \|T\|$. By $\NA(E,F)$ we mean the set of all norm-attaining operators from $E$ into $F$. If $E = F$, then we simply write $\NA(E)$ instead of $\NA(E, E)$ and we do the same with the above classes of operators.


{We will be using} different topologies in $\mathcal{L}(E, F)$. We denote by $\tau_c$ the topology of compact convergence, i.e.~the topology of uniform convergence on compacts subsets of $E$. The weak operator topology (WOT, for short) is defined by the basic neighborhoods 
\begin{equation*}
N(T; A, B, \eps) := \big\{ S \in \mathcal{L}(E, F): |y^*(T - S)(x)| < \eps, \mbox{ for every } y^* \in B, x \in A \big\},
\end{equation*}
where $A$ and $B$ are arbitrary finite sets in $E$ and $F^*$, respectively, and $\eps > 0$. Thus, in the WOT, a net $(T_{\alpha})$ converges to $T$ if and only if $(y^*(T_{\alpha}(x)))$ converges to $y^*{(T(x))}$ for every $x \in E$ and $y^* \in F^*$. Analogously, the strong operator topology (SOT, for short) is defined by the basic neighborhoods 
\begin{equation*}
N(T; A, \eps) := \big\{ S \in \mathcal{L}(E, F): \|(T - S)(x)\| < \eps, \mbox{ for every } x \in A \big\},
\end{equation*}
where $A$ is an arbitrary finite set in $E$ and $\eps > 0$. Thus, a net $(T_{\alpha})$ converges in the SOT to $T$ if and only if $(T_{\alpha}(x))$ converges in norm to $T(x)$ for every $x \in E$. We will deal with SOT and WOT closures of bounded sets in $\mathcal{L}(E,F)$. It is worth mentioning that, for a bounded convex set in $\mathcal{L}(E,F)$, the WOT closure and the SOT closure coincide \cite[Corollary VI.1.5]{DS}. Thus, the SOT and the WOT in some results in this paper can be interchanged. For a more detailed exposition on topologies in $\mathcal{L}(E, F)$, we refer the reader to \cite{DF, DS}.

 {Let us present now the necessary definitions on approximation properties we will need}. A Banach space $E$ is said to have the {approximation property (AP, for short)} if the identity operator $\Id_E$ in 
$\mathcal{L}(E)$ belongs to $\overline{ \mathcal{F} (E)}^{\tau_c}$. Given $\lambda \geq 1$, $E$ is said to have the $\lambda$-bounded approximation property ($\lambda$-BAP, for short) when $\Id_E$ belongs to $\lambda \overline{B_{\mathcal{F}(E)}}^{\tau_c}$. A Banach space is said to have {the} bounded approximation property (BAP, for short) if it has the $\lambda$-BAP for some $\lambda \geq 1$. Especially, when $\lambda = 1$, we say that $E$ has the metric approximation property (MAP, for short). Also, recall that $E$ is said to have the compact approximation property (CAP, for short) if the identity operator $\Id_E$ in 
$\mathcal{L}(E)$ belongs to $\overline{ \mathcal{K} (E)}^{\tau_c}$. 
The $\lambda$-bounded compact approximation property ($\lambda$-BCAP, for short), bounded compact approximation property (BCAP, for short) and metric compact approximation property (MCAP, for short) for a Banach space $E$ can be defined in an analogous way. 
It is known that a reflexive space has the AP if and only if it has the MAP (see \cite{Gro}). Analogously, every reflexive space with the CAP also has the MCAP (see \cite[Proposition 1 and Remark 1]{CJ}).
We refer the reader to \cite{Linden-Tz, LT} and \cite{Casazza} for background.

On the other hand, E.~Bonde introduced in \cite{Bonde} the AP and $\lambda$-BAP for pairs of Banach spaces, that is, a pair $(E,F)$ of Banach spaces is said to have {the} AP (resp., $\lambda$-BAP) if any operator $T \in \mathcal{L} (E,F)$ belongs to $\overline{ \mathcal{F} (E,F)}^{\tau_c}$ (resp., $\lambda \overline{B_{\mathcal{F}(E,F)}}^{\tau_c}$ for some $\lambda \geq 1$). It is clear that if $E$ or $F$ has the AP (resp., BAP), then the pair $(E,F)$ has the AP (resp., BAP). It is observed in \cite[Section 4]{Bonde} that there are pairs of Banach spaces $(E,F)$ with the BAP such that $E$ and $F$ do not have the BAP. 
Similarly, we have the following.

\begin{definition} \label{BCAP} The pair $(E,F)$ of Banach spaces is said to have the {compact approximation property (CAP, for short)} (resp., {bounded compact approximation property (BCAP, for short))} if any operator $T \in \mathcal{L} (E,F)$ belongs to $\overline{\mathcal{K} (E,F)}^{\tau_c}$ (resp., $\lambda \overline{B_{\mathcal{K}(E,F)}}^{\tau_c}$ for some $\lambda \geq 1$). In the case when $\lambda =1$, we say that {the pair} $(E,F)$ has the {metric compact approximation property (MCAP, for short)}.
\end{definition}

As a matter of fact, \cite[Example 4.2]{Bonde} shows that there are Banach spaces $E$ and $F$ such that $(E,F)$ has the BCAP while $E$ and $F$ do not have the CAP. Thus, assuming that a pair $(E,F)$ of Banach spaces has CAP is more general than $E$ or $F$ has the CAP. {We will be using this fact without any explicit reference throughout the paper.}





\section{The Results} \label{sectionsuffientconditions}

In this section, we shall prove Theorems \ref{theoremA}, \ref{theoremB}, \ref{theoremC}, and their consequences. We start by proving Theorem \ref{theoremA}. To do so, let us recall that a set $B \subset B_{E^*}$ is called a James boundary of a Banach space $E$ if for every $x \in S_E$, there exists $f \in B$ such that $f(x) = 1$. For a subset $G$ of $E^*$, we shall denote by $w(E, G)$ the weak topology of $X$ induced by $G$.

\begin{proof}[Proof of Theorem \ref{theoremA}] Let us assume by contradiction that every operator from $E$ into $F$ attains its norm. Then, the family 
\begin{equation*} 
B:= \Big\{ x \otimes y^*: x \in S_E, y^* \in S_{F^*} \Big\}
\end{equation*} 
is a James boundary of $\mathcal{L}(E, F)$. Indeed, for an arbitrary operator $T \in \mathcal{L}(E, F) = \NA(E, F)$, take $x \in S_E$ to be such that $\|T(x)\| = \|T\|$ and then $y^* \in S_{F^*}$ to be such that $|y^*(T(x))| = \|T(x)\| = \|T\|$. Now, since $(E, F)$ has the James property, there exists a relatively WOT-compact set $K \subset \mathcal{L}(E, F)$ such that $0 \in \overline{K}^{WOT}$ and $0 \not\in \overline{\co}^{\norm{\cdot}}(K)$. By the Uniform Boundedness principle, the set $\overline{K}^{WOT}$ is norm-bounded. By hypothesis, $\overline{K}^{WOT}$ is WOT-compact or, equivalently, $w(\mathcal{L}(E, F), B)$-compact. By a theorem of Pfitzner (see \cite{Pf} or \cite[Theorem 3.121]{FHHMZ}), we have that $\overline{K}^{WOT}$ is weakly compact. Therefore, $0 \in \overline{K}^{WOT} = \overline{K}^w$, which in particular gives that $0 \in \overline{\co}^w(K) = \overline{\co}^{\norm{\cdot}}(K)$. This contradiction yields a non-norm-attaining operator $T \in \mathcal{L}(E, F)$ as desired.
\end{proof}

Let us observe that if a pair $(E,F)$ of Banach spaces has the James property, then the dual of $\Lin(E, F)$ endowed with the norm topology does not coincide with the dual of $\Lin(E, F)$ endowed with the topology $\tau_c$ of compact convergence. As a matter of fact, if $K$ is a subset of $E$ given as in Definition \ref{definitionpropertystar}, then there exists $\varphi \in (\mathcal{L}(E, F), \|\cdot\|)^*$ such that $0 = \re \varphi(0) > \sup \left\{ \re \varphi(T): T \in \overline{\co}(K) \right\}$ thanks to the Hahn-Banach separation theorem. This implies that $\varphi$ cannot be in $(\mathcal{L}(E, F), \tau_c)^*$ since $0 \in \overline{\co}^{WOT}(K) = \overline{\co}^{\tau_c}(K)$. Moreover, using \cite[Lemma 2.3]{Mujica}, we see that if $(\Lin (E,F), \| \cdot \|)^* \neq (\Lin (E,F), \tau_c)^*$, then the space $\Lin (E,F)$ cannot be reflexive. Summarizing, we obtain the following result.

\begin{proposition} \label{duals_of_L(E,F)} Let $E$ and $F$ be Banach spaces. If the pair $(E, F)$ has the James property, then 
\begin{enumerate}
	\setlength\itemsep{0.3em} 
	\item[(i)] $(\mathcal{L}(E, F), \|\cdot\|)^* \not= (\mathcal{L}(E, F), \tau_c)^*$.
	\item[(ii)] $\mathcal{L}(E, F)$ is non-reflexive. 
	\end{enumerate} 
\end{proposition}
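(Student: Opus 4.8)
The plan is to run the Hahn--Banach separation argument already sketched above, carefully tracking for which topology each functional is continuous. Fix a set $K$ witnessing the James property as in Definition~\ref{definitionpropertystar}: so $K\subseteq\mathcal{L}(E,F)$ is relatively WOT-compact, $0\in\overline{K}^{WOT}$, and $0\notin\overline{\co}^{\norm{\cdot}}(K)$. By the Uniform Boundedness principle $K$ is norm-bounded, say $K\subseteq MB_{\mathcal{L}(E,F)}$ for some $M>0$. Since $\overline{\co}^{\norm{\cdot}}(K)$ is a norm-closed convex set omitting $0$, Hahn--Banach produces $\varphi\in(\mathcal{L}(E,F),\norm{\cdot})^*$ with $\re\varphi(0)=0>\sup\{\re\varphi(T):T\in\overline{\co}^{\norm{\cdot}}(K)\}$. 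As the inclusion $(\mathcal{L}(E,F),\tau_c)^*\subseteq(\mathcal{L}(E,F),\norm{\cdot})^*$ always holds (the norm topology being finer than $\tau_c$), statement~(i) will follow once we show that this $\varphi$ is \emph{not} $\tau_c$-continuous.

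The heart of the matter is the claim $0\in\overline{\co}^{\tau_c}(K)$. This I would deduce from $0\in\overline{K}^{WOT}\subseteq\overline{\co}^{WOT}(K)$ together with the equality $\overline{\co}^{WOT}(K)=\overline{\co}^{\tau_c}(K)$; only the non-obvious inclusion $\overline{\co}^{WOT}(K)\subseteq\overline{\co}^{\tau_c}(K)$ requires work. Here I would use that every $\tau_c$-continuous linear functional on $\mathcal{L}(E,F)$ can be written as $\psi(T)=\sum_{n}y_n^*(Tx_n)$ for a null sequence $(x_n)$ in $E$ and $(y_n^*)\in\ell_1(F^*)$ --- the representation underlying \cite[Theorem~1]{FS}, which itself rests on Grothendieck's description of compact subsets of $E$ as contained in closed absolutely convex hulls of null sequences. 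On the norm-bounded set $MB_{\mathcal{L}(E,F)}$ this series converges uniformly, so $\psi$ restricted there is a uniform limit of the WOT-continuous functionals $T\mapsto\sum_{n\leq N}y_n^*(Tx_n)$, hence WOT-continuous on $MB_{\mathcal{L}(E,F)}$. Since $MB_{\mathcal{L}(E,F)}$ is itself WOT-closed (the norm is WOT-lower semicontinuous), a point of $\overline{\co}^{WOT}(K)$ that some $\tau_c$-continuous $\psi$ strictly separated from $K$ would also be strictly separated from $K$ by a WOT-closed convex set --- impossible --- which gives the desired inclusion. Feeding $0\in\overline{\co}^{\tau_c}(K)$ against the strict inequality defining $\varphi$ then shows $\varphi\notin(\mathcal{L}(E,F),\tau_c)^*$, proving~(i).

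For (ii) I would simply combine (i) with \cite[Lemma~2.3]{Mujica}, the point being that reflexivity of $\mathcal{L}(E,F)$ would force $(\mathcal{L}(E,F),\tau_c)^*=(\mathcal{L}(E,F),\norm{\cdot})^*$: on a reflexive space the unit ball is weakly compact, and $\tau_c$ is a Hausdorff locally convex topology on $\mathcal{L}(E,F)$ coarser than the weak topology, so it must agree with the weak topology on the ball, which is enough to recover the whole dual. Since this contradicts (i), $\mathcal{L}(E,F)$ is non-reflexive.

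I expect the one genuine obstacle to be the equality $\overline{\co}^{WOT}(K)=\overline{\co}^{\tau_c}(K)$ --- equivalently, the fact that on norm-bounded subsets of $\mathcal{L}(E,F)$ the $\tau_c$-continuous functionals are WOT-continuous. This is exactly where the norm boundedness of $K$ (a free byproduct of the James property through Uniform Boundedness) and the ``compact convergence'' nature of $\tau_c$ enter; everything else is separation of convex sets and a direct appeal to \cite[Lemma~2.3]{Mujica}.
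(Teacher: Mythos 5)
Your proof is correct and follows essentially the same route as the paper: Hahn--Banach separation of $0$ from $\overline{\co}^{\norm{\cdot}}(K)$, the key equality $\overline{\co}^{WOT}(K)=\overline{\co}^{\tau_c}(K)$ for the norm-bounded set $K$, and a direct appeal to \cite[Lemma 2.3]{Mujica} for (ii). The only difference is that you actually justify the closure equality (via the Grothendieck--Feder--Saphar representation of $\tau_c$-functionals and uniform convergence of the partial sums on bounded sets), a step the paper asserts without proof; your parenthetical gloss of Mujica's lemma (``$\tau_c$ is coarser than the weak topology'') is not quite accurate as stated, but it is harmless since the lemma is cited rather than reproved.
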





One easy consequence of Theorem \ref{theoremA} is that if $E$ is reflexive and a pair $(E,F)$ has the James property, then $\mathcal{K}(E,F)$ cannot be equal to the {whole} space $\mathcal{L} (E,F)$. As a matter of fact, the following result gives us a rather general observation.

\begin{proposition} \label{holubtypetheoremfirstpart} Let $E$ be a reflexive space and $F$ be an arbitrary Banach space. If $\mathcal{K}(E, F) = \mathcal{L}(E, F)$, then $(\mathcal{L}(E, F), \|\cdot\|)^* = (\mathcal{L}(E, F), \tau_c)^*$.
\end{proposition}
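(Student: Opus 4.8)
The plan is to show that whenever $E$ is reflexive, the equality $\mathcal{K}(E,F) = \mathcal{L}(E,F)$ forces the topologies $\tau_c$ and $\n$ to have the same continuous linear functionals on $\mathcal{L}(E,F)$. The key external input is \cite[Theorem 1]{FS} of Feder and Saphar, which identifies the dual of $(\mathcal{K}(E,F),\tau_c)$ (equivalently, the $\tau_c$-continuous functionals on $\mathcal{K}(E,F)$) with a concrete space of ``nuclear-type'' functionals: those of the form $T \mapsto \sum_n \langle T x_n, y_n^*\rangle$ where $(x_n) \subset E$ and $(y_n^*) \subset F^*$ satisfy $\sum_n \|x_n\|\,\|y_n^*\| < \infty$ — more precisely, with $(x_n)$ relatively compact (or norm-summable in a suitable sense) and $(y_n^*) \in \ell_1(F^*)$, this being exactly the predual of $\mathcal{K}(E,F)$ under $\tau_c$. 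Since by hypothesis $\mathcal{K}(E,F) = \mathcal{L}(E,F)$, this description applies verbatim to $\mathcal{L}(E,F)$.

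First I would recall the two inclusions that hold in complete generality: every $\tau_c$-continuous functional is $\n$-continuous (because $\tau_c$ is coarser than the norm topology), so $(\mathcal{L}(E,F),\tau_c)^* \subseteq (\mathcal{L}(E,F),\n)^*$ always. The content is the reverse inclusion. Take $\varphi \in (\mathcal{L}(E,F),\n)^* = \mathcal{K}(E,F)^*$ (using the hypothesis). By the Feder–Saphar representation, $\varphi$ is given by a pair of sequences $(x_n) \subset E$, $(y_n^*) \subset F^*$ with $\sum_n \|x_n\|\,\|y_n^*\| < \infty$ via $\varphi(T) = \sum_n \langle Tx_n, y_n^*\rangle$; here one uses that $E$ reflexive implies $E$ has the (metric) approximation property is \emph{not} needed, but reflexivity of $E$ is what makes bounded subsets of $E$ relatively weakly compact and lets one pass from norm-summability to the compact-convergence estimate cleanly. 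Then I would check directly that such a $\varphi$ is $\tau_c$-continuous: given $\eps > 0$, choose $N$ with $\sum_{n > N}\|x_n\|\,\|y_n^*\| < \eps/2$, let $L = \{x_1,\dots,x_N\}$ (a finite, hence compact, set) and $M = 1 + \sum_{n \le N}\|y_n^*\|$; then for $T$ in the $\tau_c$-neighborhood $\{S : \sup_{x \in L}\|Sx\| < \eps/(2M)\} \cap \{S : \|S\| \le R\}$ for appropriate $R$, one gets $|\varphi(T)| \le \sum_{n\le N}\|Tx_n\|\,\|y_n^*\| + \|T\|\sum_{n>N}\|x_n\|\,\|y_n^*\|$, and the first term is small by the neighborhood condition while the tail is small by the choice of $N$ — so $\varphi$ is $\tau_c$-continuous at $0$, hence everywhere by linearity.

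A cleaner route, which I would probably prefer in the write-up, is to bypass the $\eps$-estimate: the space $E \pten F^*$ (projective tensor product) has dual $\mathcal{L}(E,F^{**}) = \mathcal{L}(E,F)$ when... — no, that forces $F$ reflexive, which we are avoiding. Instead, one observes that the Feder–Saphar predual of $(\mathcal{K}(E,F),\tau_c)$ is precisely the completion of $E \otimes F^*$ in a tensor norm intermediate between $\iten$ and $\pten$, and the point is simply that $(\mathcal{K}(E,F),\tau_c)^* = (\mathcal{K}(E,F),\n)^*$ as an \emph{abstract} consequence of the theorem — Feder and Saphar identify both, and they turn out to be the same space of functionals when the compact and bounded operators coincide. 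So the proof reduces to: invoke $\mathcal{K}(E,F) = \mathcal{L}(E,F)$, quote \cite[Theorem 1]{FS} to identify $(\mathcal{L}(E,F),\tau_c)^*$ with the relevant integral/nuclear functionals, observe this space already exhausts $\mathcal{K}(E,F)^* = (\mathcal{L}(E,F),\n)^*$, and conclude equality.

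The main obstacle I anticipate is pinning down the exact statement of \cite[Theorem 1]{FS} and verifying its hypotheses in our setting — in particular whether it requires an approximation property on $E$ or $F$, and whether reflexivity of $E$ is used to guarantee that the relevant tensor-product predual is actually a predual (rather than just a norming subspace) of $\mathcal{K}(E,F)$. If Feder–Saphar's theorem needs AP, then the statement as given would implicitly need the reflexive $E$ to have the MAP, which is automatic, but one still has to be careful that the pair $(E,F)$ inherits enough structure. Modulo correctly citing that result, the argument is short; the real work was already done in \cite{FS}, and the role of reflexivity of $E$ here is just to ensure bounded sets are weakly compact so that $\tau_c$ on bounded sets is well-behaved and the duality computation goes through.
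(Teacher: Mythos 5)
Your proposal is correct and follows essentially the same route as the paper: both arguments rest on \cite[Theorem 1]{FS} to represent every element of $(\mathcal{K}(E,F),\|\cdot\|)^*$ by an element of $E\pten F^*$ (reflexivity of $E$ is exactly what makes that theorem applicable and puts the representing sequence in $E$ rather than $E^{**}$), combined with the standard fact that $(\mathcal{L}(E,F),\tau_c)^*$ is precisely the image of $E\pten F^*$ under the same pairing, so the two duals coincide once $\mathcal{K}(E,F)=\mathcal{L}(E,F)$. One caution about your direct $\varepsilon$-verification: the set $\{S:\sup_{x\in L}\|Sx\|<\varepsilon/(2M)\}\cap\{S:\|S\|\le R\}$ is not a $\tau_c$-neighborhood of $0$, so that computation only yields continuity on norm-bounded sets; the clean way to get genuine $\tau_c$-continuity is to rescale the representing tensor so that $(x_n)$ is norm-null and $\sum_n\|y_n^*\|<\infty$, whence $|\varphi(T)|\le\bigl(\sum_n\|y_n^*\|\bigr)\sup_{x\in K}\|Tx\|$ for the compact set $K=\{x_n\}\cup\{0\}$ --- which is the content of the reference the paper cites, and is the route your ``cleaner'' second paragraph correctly prefers.
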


\begin{proof}
	Let $D: E \pten F^* \longrightarrow (\mathcal{L}(E, F), \tau_c)^*$ be defined by
$D (z)(T):= \sum_{n=1}^{\infty} y_n^*(T(x_n))$
for every $z \in E \pten F^*$ with $z = \sum_{n=1}^{\infty} x_n \otimes y_n^*$ and $T \in \mathcal{L}(E, F)$. It is well known that $D$ is a surjective map (see, for example, \cite[5.5, pg. 62]{DF}). Therefore, we have that
$(\mathcal{L}(E, F), \tau_c)^* =( E \pten F^* )/ \ker D.$
On the other hand, from the result \cite[Theorem 1]{FS}, we have that the map $V: E \pten F^* \longrightarrow (\mathcal{K}(E, F), \|\cdot\|)^*$ defined by
$V(z)(T) := \sum_{n=1}^{\infty} y_n^*(T(x_n))$
for $z = \sum_{n=1}^{\infty} x_n \otimes y_n^*$ and $T \in \mathcal{K}(E, F)$, satisfies the following: for every $\varphi \in (\mathcal{K}(E, F), \|\cdot\|)^*$, there exists $v \in E \pten F^*$ such that $\varphi = V(v)$ and $\|\varphi\| = \|v\|$. In particular, we have that $(\mathcal{K}(E, F), \|\cdot\|)^* = (E \pten F^*) / \ker V$. 
Thus, if $\mathcal{K}(E, F) = \mathcal{L}(E, F)$, then $D(z)(T) = V(z)(T)$ for every $z \in E \pten F^*$ and every $T \in \mathcal{K}(E, F)$; hence $\ker D = \ker V$ and $(\Lin(E,F), \| \cdot\|)^* =(\mathcal{L}(E, F), \tau_c)^*$. 
\end{proof}

Let us now go towards the proof of Theorem \ref{theoremB}. We show the following result which will help us to prove that if $(E, F)$ does not satisfy the James property, then $\overline{B_{\mathcal{K}(E, F)}}^{SOT}$ coincides with $B_{\mathcal{K}(E, F)}$. Recall that the sequential closure of a set in a topological space is the family of all limit points of sequences on the set in consideration. 


\begin{lemma} \label{oldcharacterizationJamesProperty} Let $E$ and $F$ be Banach spaces. Suppose that there exists a norm-closed convex set $C \subseteq \mathcal{L}(E, F)$ which is not sequentially closed in the strong operator topology. Then, $(E, F)$ has the James property. 
\end{lemma}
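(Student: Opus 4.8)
The plan is to produce a witness for the James property directly from a sequence showing that $C$ is not sequentially SOT-closed. By hypothesis there are operators $T_n \in C$ and a limit $T \in \mathcal{L}(E,F) \setminus C$ with $T_n \to T$ in the strong operator topology. I would then take
\[
K := \{\, T_n - T : n \in \N \,\} \subseteq \mathcal{L}(E, F)
\]
and check the three requirements in Definition \ref{definitionpropertystar}. The condition $0 \in \overline{K}^{WOT}$ is immediate, since $T_n \to T$ in the SOT gives $T_n - T \to 0$ in the SOT, hence in the WOT, so that $0$ lies in the WOT-closure of $K$.

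For relative WOT-compactness I would invoke the elementary fact that a convergent sequence together with its limit is compact in any topological space: given a WOT-open cover of $S := K \cup \{0\}$, one of its members contains $0$ and therefore all but finitely many of the $T_n - T$, while the remaining finitely many points are covered by finitely many further members of the cover. Since the WOT is Hausdorff, $S$ is WOT-closed, so $\overline{K}^{WOT} \subseteq S$ is a WOT-closed subset of a WOT-compact set and hence itself WOT-compact.

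Finally, the condition $0 \notin \overline{\co}^{\norm{\cdot}}(K)$ is where the convexity and norm-closedness of $C$ enter. Any element of $\co(K)$ has the form $\sum_j \lambda_j (T_{n_j} - T) = S - T$ with $\lambda_j \geq 0$, $\sum_j \lambda_j = 1$, and $S := \sum_j \lambda_j T_{n_j} \in C$ by convexity, so that $\co(K) \subseteq C - T$. Consequently, if $0$ belonged to the norm-closed convex hull of $K$, there would exist $S_k \in C$ with $S_k - T \to 0$ in norm, i.e.\ $S_k \to T$ in norm; since $C$ is norm-closed this would force $T \in C$, contradicting the choice of $T$. Hence $K$ witnesses the James property for $(E, F)$. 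The only step here that is not purely formal is the relative WOT-compactness of $K$, which however reduces to the standard observation above, so I do not expect a genuine obstacle.
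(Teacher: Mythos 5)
Your proposal is correct and follows essentially the same route as the paper: the paper takes the sequence $(R_n)\subseteq C$ with SOT-limit $R\notin C$, normalizes to $R=0$ by translation, and uses $K=\{R_n\}$, which is exactly your $K=\{T_n-T\}$ with the translation carried out explicitly. Your verifications of the three conditions (in particular that $\overline{\co}^{\norm{\cdot}}(K)\subseteq C-T$ forces $0\notin\overline{\co}^{\norm{\cdot}}(K)$) are precisely the details the paper leaves implicit.
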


\begin{proof}  Suppose that $C \subseteq \mathcal{L}(E, F)$ is norm-closed but not SOT-sequentially closed. This implies that there exists a sequence of operators $(R_n) \subseteq C$ such that $(R_n)$ converges in the SOT (and therefore in the WOT) to an operator $R \notin C$. We may (and we do) suppose that $R = 0$. Set $K:= \{R_n:n \in \N\} \subset \mathcal{L}(E, F)$. Therefore, $K$ is relatively WOT-compact, $0 \in \overline{K}^{WOT}$ but $0$ cannot be in $\overline{\co}(K)$ by hypothesis. Therefore, $(E, F)$ has the James property.
\end{proof}

It is not difficult to check that, {for} a bounded subset $C$ of $\Lin (E,F)$, with $E$ separable, the SOT-closure of $C$ coincides with the SOT-sequential closure of $C$. Furthermore, the following result shows that the unit ball of $\mathcal{K} (E,F)$ is SOT-closed if it is SOT-sequentially closed under the assumption that $E$ has the separable complementation property. Recall that a Banach space $E$ is said to have the separable complementation property if for every separable subspace $Y$ in $E$, there is a separable subspace $Z$ with $Y \subset Z \subset E$ and $Z$ is complemented in $E$. It is worth mentioning that D.~Amir and J.~Lindenstrauss proved in \cite{AL} that every weakly compactly generated Banach space (and therefore every reflexive space) has the separable complementation property.

\begin{lemma}\label{LemmaSequentialClosureOfK(E,F)} Let $E$ be a Banach space with the separable complementation property and $F$ be an arbitrary Banach space. Then, the unit ball $B_{\mathcal{K}(E,F)}$ is SOT-closed if and only if it is SOT-sequentially closed.
\end{lemma}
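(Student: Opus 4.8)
The plan is to establish the non-trivial implication: assuming $B_{\mathcal{K}(E,F)}$ is SOT-sequentially closed, one shows it is SOT-closed (the converse being immediate). So let $T$ lie in the SOT-closure of $B_{\mathcal{K}(E,F)}$. First note that $\|T\|\le 1$ for free, since the unit ball of $\mathcal{L}(E,F)$ is SOT-closed (if $S_\alpha\to S$ in the SOT with $\sup_\alpha\|S_\alpha\|\le 1$, then $\|Sx\|=\lim_\alpha\|S_\alpha x\|\le\|x\|$ for all $x\in E$); hence the entire task is to prove that $T$ is compact. I would argue by contradiction: if $T(B_E)$ is not relatively compact, fix a sequence $(x_n)\subset B_E$ such that $(Tx_n)$ has no norm-convergent subsequence.

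The heart of the argument is to localise to a separable complemented piece of $E$. Set $Y:=\overline{\spann}\{x_n:n\in\N\}$, which is separable, and invoke the separable complementation property of $E$ to obtain a separable subspace $Z$ with $Y\subseteq Z\subseteq E$ together with a bounded projection $P\colon E\to Z$. Fix a dense sequence $(z_k)$ in $Z$. Since $T$ belongs to the SOT-closure of $B_{\mathcal{K}(E,F)}$, for each $n$ one can select an operator $S_n\in B_{\mathcal{K}(E,F)}$ with $\|S_nz_k-Tz_k\|<1/n$ for all $k\le n$; a routine $\eps/3$-estimate, using $\|S_n\|\le 1$, $\|T\|\le 1$, and density of $(z_k)$, then upgrades this to $S_nz\to Tz$ for every $z\in Z$.

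To transfer this convergence to all of $E$, I would pass to the operators $S_nP$ and $TP$ in $\mathcal{L}(E,F)$: each $S_nP$ is compact with $\|S_nP\|\le\|P\|$, so $S_nP\in\|P\|\,B_{\mathcal{K}(E,F)}$, while $S_nPx=S_n(Px)\to T(Px)=TPx$ for every $x\in E$, i.e.\ $S_nP\to TP$ in the SOT. Since $\|P\|\,B_{\mathcal{K}(E,F)}$ is again SOT-sequentially closed (scaling is an SOT-homeomorphism), it follows that $TP\in\|P\|\,B_{\mathcal{K}(E,F)}$; in particular $TP$ is compact. But $P$ is the identity on $Z$, so $T|_Z=TP|_Z$ is compact, hence $T(B_Z)$ is relatively compact; as $x_n\in Y\subseteq Z$ with $\|x_n\|\le 1$, the sequence $(Tx_n)$ lies in $T(B_Z)$ and therefore has a norm-convergent subsequence — contradicting the choice of $(x_n)$. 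Thus $T$ is compact, $T\in B_{\mathcal{K}(E,F)}$, and $B_{\mathcal{K}(E,F)}$ is SOT-closed.

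The step I expect to be the main obstacle — indeed the only point that is not bookkeeping — is this reduction to a separable complemented subspace together with the use of the projection $P$ to push the SOT-convergence obtained merely on $Z$ back to the whole space $E$; this is exactly where the separable complementation property is used, and it costs only the harmless constant $\|P\|$. The remaining ingredients (SOT-closedness of $B_{\mathcal{L}(E,F)}$, the $\eps/3$-argument, and stability of SOT-sequential closedness under scaling) are routine.
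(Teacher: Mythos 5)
Your proof is correct and follows essentially the same route as the paper's: both localise to a separable complemented subspace $Z$ containing the witness of non-compactness, extract a sequence of compact operators SOT-converging to $T|_Z$ (possible because $Z$ is separable), and compose with the projection $P$ to return to $\mathcal{L}(E,F)$. You are in fact slightly more careful than the paper on one point — explicitly noting that the composed sequence lives in $\|P\|\,B_{\mathcal{K}(E,F)}$ rather than $B_{\mathcal{K}(E,F)}$ and that this scaled ball is still SOT-sequentially closed — which the paper's argument glosses over.
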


\begin{proof}
Is it enough to check that if $B_{\mathcal{K}(E,F)}$ is not SOT-closed then it is not SOT-sequentially closed. 
Suppose that $T$ is an operator which belongs to the SOT-closure of $B_{\mathcal{K}(E,F)}$ but not to $B_{\mathcal{K}(E,F)}$. Notice that $T$ is {non-compact}; hence there exists a separable subspace $E_0$ of $E$ such that $T|_{E_0}$ is non-compact. 
Choose a separable subspace $Z$ of $E$ such that $E_0 \subset Z \subset E$ and $Z$ is complemented in $E$. 
Notice that $T|_{Z}$ is non-compact and belongs to the SOT-closure of $B_{\mathcal{K} (Z, F)}$. 
As $Z$ is separable, we have that $T|_Z$ is indeed in the SOT-sequential closure of $B_{\mathcal{K} (Z,F)}$. Let $(K_n)$ be a sequence in $B_{\mathcal{K} (Z,F)}$ converging to $T|_Z$ in the SOT. Letting $P$ be a projection from $E$ onto $Z$, it is immediate that $T|_Z \circ P$ is non-compact and $K_n \circ P$ is SOT-convergent to $T|_Z \circ P$. This proves that $B_{\mathcal{K}(E,F)}$ is not SOT-sequentially closed.
\end{proof}

It is worth mentioning however that the SOT-closure and SOT-sequential closure are different in general  as the following remark shows.

\begin{remark} \label{sequentialclosure} 
In general, it is not true that the SOT-sequential closure of a bounded convex set $C$ in $\mathcal{L}(E,F)$ coincides with the SOT-closure of $C$.
An example is given by 
\begin{equation*} 
C := \Big\{ T \in B_{\mathcal{L}(\ell_2(\omega_1))} \colon \mbox{there is} \ \alpha<\omega_1 \ \mbox{such that} \left(T(x)\right)_{\beta}=0 \ \mbox{for every} \ \beta>\alpha,~x\in \ell_2(\omega_1) \Big\}.
\end{equation*} 
It is immediate that $C$ is SOT-sequentially closed. Nevertheless, since the canonical projections $P_\alpha \in \mathcal{L}(\ell_2(\omega_1))$ with $\alpha<\omega_1$, defined by $\left(P_\alpha(x)\right)_\beta=x_\beta$ if $\beta\leq \alpha$ and $0$ otherwise, are in $C$ and satisfy that $\{P_\alpha\}_{\alpha<\omega_1}$ SOT-converges to the identity, which is not in $C$, it follows that $C$ is not SOT-closed.
\end{remark}

Notice that if $E$ is reflexive, then it has the separable complementation property. By Lemma \ref{LemmaSequentialClosureOfK(E,F)}, $B_{\mathcal{K}(E, F)}$ is SOT-closed if and only if it is SOT-sequentially closed. Therefore, if we assume that $\overline{B_{\mathcal{K}(E, F)}}^{SOT} \not= B_{\mathcal{K}(E, F)}$, then $(E, F)$ has the James property by Lemma \ref{oldcharacterizationJamesProperty}. Therefore, we have the following result.

\begin{proposition} \label{diagramfirstimplication} Let $E$ and $F$ be Banach spaces. If $\overline{B_{\mathcal{K}(E, F)}}^{SOT} \not= B_{\mathcal{K}(E, F)}$, then $(E, F)$ has the James property. 
\end{proposition}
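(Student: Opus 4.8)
The plan is to recognise the statement as a short packaging of Lemmas \ref{oldcharacterizationJamesProperty} and \ref{LemmaSequentialClosureOfK(E,F)}. Put $C := B_{\mathcal{K}(E,F)}$. This set is convex and norm-closed, since $\mathcal{K}(E,F)$ is a norm-closed subspace of $\mathcal{L}(E,F)$ (a uniform limit of compact operators is compact) and $C$ is its intersection with the norm-closed ball $B_{\mathcal{L}(E,F)}$. By Lemma \ref{oldcharacterizationJamesProperty}, in order to conclude that $(E,F)$ has the James property it suffices to check that $C$ is \emph{not} sequentially closed in the strong operator topology. So the whole task reduces to upgrading the hypothesis ``$C$ is not SOT-closed'' to ``$C$ is not SOT-sequentially closed''.

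That upgrade is exactly the content of Lemma \ref{LemmaSequentialClosureOfK(E,F)}, valid whenever $E$ has the separable complementation property---in particular whenever $E$ is reflexive, by the Amir--Lindenstrauss result recalled above. Hence $B_{\mathcal{K}(E,F)}$ is SOT-closed if and only if it is SOT-sequentially closed, and feeding this back into Lemma \ref{oldcharacterizationJamesProperty} finishes the proof. If instead one prefers to exhibit the James witness directly, unwinding the two lemmas gives the following recipe. Take $T$ in the SOT-closure of $B_{\mathcal{K}(E,F)}$ but not in it; a net-limit argument forces $\norm{T}\leq 1$, so the obstruction is precisely that $T$ fails to be compact. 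Since $T(B_E)$ is not relatively compact, pick a separated sequence $(Tx_n)$ with $x_n\in B_E$, set $E_0=\overline{\Span}\{x_n\}$, and enlarge $E_0$---using the separable complementation property---to a separable subspace $Z$ of $E$ that is complemented by a projection $P$, observing that $T|_Z$ is still non-compact and still lies in $\overline{B_{\mathcal{K}(Z,F)}}^{SOT}$. As $Z$ is separable, the SOT-closure and the SOT-sequential closure coincide for bounded subsets of $\mathcal{L}(Z,F)$, so one can extract a \emph{sequence} $(K_n)$ in $B_{\mathcal{K}(Z,F)}$ with $K_n\to T|_Z$ in the SOT. Finally $(K_n\circ P)$ lies in the norm-closed convex set $\norm{P}\cdot B_{\mathcal{K}(E,F)}$ and converges in the SOT to $T|_Z\circ P$, which is non-compact---its restriction to $Z$ equals $T|_Z$---and hence lies outside that set; so Lemma \ref{oldcharacterizationJamesProperty}, applied to $\norm{P}\cdot B_{\mathcal{K}(E,F)}$, gives the James property for $(E,F)$.

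The main obstacle is the only non-formal step above: crossing from ``SOT-closed'' to ``SOT-sequentially closed''. The SOT is not metrisable on $\mathcal{L}(E,F)$ in general, and Remark \ref{sequentialclosure} shows that these two closure operations genuinely differ for bounded convex sets, so some structural input on $E$ is unavoidable; the separable complementation property---automatic under reflexivity, which is the setting used throughout this paper---is precisely what makes the reduction to a separable, complemented subspace, and therefore the extraction of an honest sequence, go through. The only further care needed is the rescaling by $\norm{P}$, which is harmless because a positive dilate of $B_{\mathcal{K}(E,F)}$ is still norm-closed and convex, exactly the hypothesis Lemma \ref{oldcharacterizationJamesProperty} requires.
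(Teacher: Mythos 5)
Your proof is correct and is essentially the paper's own argument: reduce to Lemma \ref{oldcharacterizationJamesProperty} applied to the norm-closed convex set $B_{\mathcal{K}(E,F)}$, and upgrade ``not SOT-closed'' to ``not SOT-sequentially closed'' via Lemma \ref{LemmaSequentialClosureOfK(E,F)} using the separable complementation property of $E$. You are also right to flag that this route genuinely needs $E$ to have the separable complementation property (e.g.\ $E$ reflexive, the setting of the surrounding diagram), a hypothesis the paper likewise uses in its derivation even though it is not written into the formal statement, and your rescaling of the witness set to $\norm{P}\, B_{\mathcal{K}(E,F)}$ is a harmless (indeed slightly more careful) variant of the paper's normalization.
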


In order to prove Theorem \ref{theoremB}, we need the following lemma.

\begin{lemma}
\label{LemmaGeneralizationMujica}
Let $E$ be a reflexive space and $F$ be an arbitrary Banach space. Suppose that the pair $(E,F)$ has the BCAP. 
Then $\mathcal{K} (E,F) = \mathcal{L}(E,F)$ if and only if the unit ball $B_{\mathcal{K}(E,F)}$ is SOT-closed. 
\end{lemma}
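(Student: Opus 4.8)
The plan is to prove the two implications separately. The forward direction, $\mathcal{K}(E,F) = \mathcal{L}(E,F) \Rightarrow B_{\mathcal{K}(E,F)}$ is SOT-closed, should not require the BCAP at all: assuming equality, $B_{\mathcal{K}(E,F)} = B_{\mathcal{L}(E,F)}$, and the closed unit ball of $\mathcal{L}(E,F)$ is always SOT-closed, since the norm $\|T\| = \sup\{\|Tx\| : x \in B_E\}$ is SOT-lower-semicontinuous (each $T \mapsto \|Tx\|$ is SOT-continuous, and a supremum of continuous functions is lower semicontinuous). Alternatively, one may invoke the reflexivity of $E$ to run this through boundedness and weak compactness, but the lower-semicontinuity argument is cleanest. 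This is the easy half.

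For the substantive direction, I would assume the BCAP and that $B_{\mathcal{K}(E,F)}$ is SOT-closed, and aim to show $\mathcal{K}(E,F) = \mathcal{L}(E,F)$. Fix $T \in \mathcal{L}(E,F)$; by rescaling assume $\|T\| \leq 1$. Since $(E,F)$ has the $\lambda$-BCAP for some $\lambda \geq 1$, $T$ lies in $\lambda\overline{B_{\mathcal{K}(E,F)}}^{\tau_c}$, so there is a net $(K_\alpha) \subseteq \lambda B_{\mathcal{K}(E,F)}$ with $K_\alpha \to T$ in $\tau_c$. Now $\tau_c$-convergence implies SOT-convergence (uniform convergence on compact sets implies pointwise convergence, and singletons are compact), so $T \in \lambda\overline{B_{\mathcal{K}(E,F)}}^{SOT}$. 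The hypothesis gives only that $\overline{B_{\mathcal{K}(E,F)}}^{SOT} = B_{\mathcal{K}(E,F)}$; combined with the obvious $\lambda\overline{B_{\mathcal{K}(E,F)}}^{SOT} = \overline{\lambda B_{\mathcal{K}(E,F)}}^{SOT}$ and the fact that $\lambda B_{\mathcal{K}(E,F)}$ consists of compact operators, we need to upgrade "SOT-closedness of $B_{\mathcal{K}(E,F)}$" to "SOT-closedness of $\lambda B_{\mathcal{K}(E,F)}$", which is immediate by scaling. Hence $T \in \lambda B_{\mathcal{K}(E,F)} \subseteq \mathcal{K}(E,F)$, and we are done.

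The point I would scrutinize most carefully is precisely this scaling step together with the passage from the BCAP (which a priori only says $T \in \lambda \overline{B_{\mathcal{K}(E,F)}}^{\tau_c}$, not $T \in \overline{\mathcal{K}(E,F)}^{\tau_c} \cap \lambda B_{\mathcal{L}}$) to membership in $\mathcal{K}(E,F)$. One must check that $\lambda B_{\mathcal{K}(E,F)}$ being SOT-closed really does follow from $B_{\mathcal{K}(E,F)}$ being SOT-closed — it does, since $S \mapsto \lambda^{-1} S$ is an SOT-homeomorphism of $\mathcal{L}(E,F)$ onto itself mapping $\lambda B_{\mathcal{K}(E,F)}$ onto $B_{\mathcal{K}(E,F)}$. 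The reflexivity of $E$ is used only implicitly, to guarantee (via Lemma \ref{LemmaSequentialClosureOfK(E,F)} and the separable complementation property, or simply to stay within the framework of the theorem) that the SOT-closure behaves well; in the argument above it is not strictly needed for this particular equivalence, but I would keep the hypothesis as stated to match Theorem \ref{theoremB}. The main obstacle, such as it is, is bookkeeping with the constant $\lambda$; there is no deep difficulty once one notices that $\tau_c \Rightarrow SOT$ convergence and that SOT-closedness is scaling-invariant.
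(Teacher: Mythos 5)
Your proof is correct and follows essentially the same route as the paper: the forward implication is immediate, and the substantive direction uses the BCAP to place $T$ in $\lambda\overline{B_{\mathcal{K}(E,F)}}^{\tau_c}\subseteq\lambda\overline{B_{\mathcal{K}(E,F)}}^{SOT}=\lambda B_{\mathcal{K}(E,F)}$, exactly as in the paper (which likewise never invokes the reflexivity of $E$ in this lemma). Your extra care about the scaling step and the inclusion of $\tau_c$-closures in SOT-closures is sound and matches the paper's reasoning.
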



\begin{proof}
First, note that as the pair $(E,F)$ has the BCAP, 
$\mathcal{L}(E,F)=\bigcup_{\lambda>0} \lambda\overline{ B_{\mathcal{K}(E,F)}}^{\tau_c}$.
Since $\overline{ B_{\mathcal{K}(E,F)}}^{\tau_c} \subseteq \overline{ B_{\mathcal{K}(E,F)}}^{SOT}$, we have that 
$\mathcal{L}(E,F)=\bigcup_{\lambda>0} \lambda\overline{ B_{\mathcal{K}(E,F)}}^{SOT}.$
So, if we assume $B_{\mathcal{K}(E,F)}$ to be SOT-closed, then 
\begin{equation*} 
\mathcal{L}(E,F)= \bigcup_{\lambda>0} \lambda\overline{ B_{\mathcal{K}(E,F)}}^{SOT}=\bigcup_{\lambda>0} \lambda B_{\mathcal{K}(E,F)} =\mathcal{K}(E,F).
\end{equation*} 
The other implication is immediate.
\end{proof}

To prove Theorem \ref{theoremB}, we consider the following conditions and we use Theorem \ref{theoremA}, Proposition \ref{duals_of_L(E,F)}, Proposition \ref{holubtypetheoremfirstpart}, Proposition \ref{diagramfirstimplication}, and Lemma \ref{LemmaGeneralizationMujica}.

	\begin{itemize}
	\setlength\itemsep{0.3em} 
	\item[(a)]$\mathcal{K}(E, F) = \mathcal{L}(E, F)$.
	\item[(b)] Every operator from $E$ into $F$ attains its norm.
	
	\item[(c)] The unit ball $B_{\mathcal{K}(E, F)}$ is closed in the strong operator topology.
	\item[(d)] $(\mathcal{L}(E,F), \tau_c)^* = (\mathcal{L}(E, F), \n)^*$.
\end{itemize}

\begin{proof}[Proof of Theorem \ref{theoremB}] Let $E$ be reflexive and $F$ be an arbitrary Banach space.
It is clear that (a) $\Longrightarrow$ (b). Moreover, (b) implies that $(E,F)$ does not have the James property (by applying Theorem \ref{theoremA}), which in turn implies (c) (by applying Proposition \ref{diagramfirstimplication}). On the other hand, Proposition \ref{holubtypetheoremfirstpart} shows (a) $\Longrightarrow$ (d). By Proposition \ref{duals_of_L(E,F)}, (d) implies that $(E,F)$ does not have the James Property and, therefore, it implies (c) (by applying Proposition \ref{diagramfirstimplication}). Finally, if the pair $(E,F)$ has the BCAP, then the implication (c) $\Longrightarrow$ (a) follows from Lemma \ref{LemmaGeneralizationMujica}. 
\end{proof}

M.I. Ostrovskii asked in \cite[\textsection12, pg.~65]{MP} whether there exist infinite dimensional Banach spaces on which every operator attains its norm (this question is also asked in \cite[Problem 8]{KOS} and \cite[Problem 217]{GMZ}). By Holub's Theorem, if such an infinite dimensional Banach space exists, it cannot have the AP. Theorem \ref{THEOFINALGENERALIZATION} below should be seen as a generalization of this fact.   Let us recall that given a (norm-closed) operator ideal $\mathcal{A}$ and $\lambda \geq 1$, a Banach space $E$ is said to have the $\lambda$-$\mathcal{A}$-approximation property (for short, $\lambda$-$\mathcal{A}$-AP) if the identity operator $\Id_E$ belongs to $\overline{\{ T \in \mathcal{A} (E,E) : \|T \| \leq \lambda \}}^{\tau_c}$. We say that $E$ has the bounded-$\mathcal{A}$-AP if it has the $\lambda$-$\mathcal{A}$-AP for some $\lambda \geq 1$. This general approximation property has been studied, for instance, in \cite{GW, LO, O1, Reinov}.

\begin{theorem}
	\label{THEOFINALGENERALIZATION}
	If there is an infinite dimensional Banach space $E$ such that every operator on $\mathcal{L}(E)$ attains its norm, then $E$ does not have the bounded $\mathcal{A}$-approximation property for any  ideal $\mathcal{A}$ not containing the identity on $E$.
\end{theorem}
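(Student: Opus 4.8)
The strategy is a proof by contraposition combined with Theorem \ref{theoremB} (or rather the core engine, Theorem \ref{theoremA}). Suppose $E$ is an infinite-dimensional Banach space such that every operator in $\mathcal{L}(E)$ attains its norm, and suppose moreover that $E$ has the bounded $\mathcal{A}$-approximation property for some norm-closed operator ideal $\mathcal{A}$; I want to conclude that $\Id_E \in \mathcal{A}(E,E)$, i.e. $\mathcal{A}$ contains the identity on $E$. First I would record that since $\NA(E) = \mathcal{L}(E)$ forces $E$ to be reflexive (by James' theorem applied to functionals), we are in the setting where the machinery of the paper applies: in particular, $E$ has the separable complementation property, and bounded convex subsets of $\mathcal{L}(E)$ have coinciding WOT- and SOT-closures.

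The heart of the argument is to show that the pair $(E,E)$ does \emph{not} have the James property — this is exactly what Theorem \ref{theoremA} delivers from the hypothesis $\NA(E) = \mathcal{L}(E)$. Then I would run the same reasoning as in Lemma \ref{LemmaGeneralizationMujica}, but with the ideal $\mathcal{A}$ in place of $\mathcal{K}$. Concretely: the bounded $\mathcal{A}$-AP gives $\Id_E \in \lambda\,\overline{B_{\mathcal{A}(E,E)}}^{\tau_c}$ for some $\lambda \ge 1$, where $B_{\mathcal{A}(E,E)} = \{T \in \mathcal{A}(E,E): \|T\|\le 1\}$; since $\overline{B_{\mathcal{A}(E,E)}}^{\tau_c} \subseteq \overline{B_{\mathcal{A}(E,E)}}^{SOT}$, it suffices to show that $B_{\mathcal{A}(E,E)}$ is SOT-closed, for then $\Id_E \in \lambda B_{\mathcal{A}(E,E)} \subseteq \mathcal{A}(E,E)$ and we are done. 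To get SOT-closedness of $B_{\mathcal{A}(E,E)}$, I would invoke Proposition \ref{diagramfirstimplication} in its natural generalization: if $\overline{B_{\mathcal{A}(E,E)}}^{SOT} \ne B_{\mathcal{A}(E,E)}$, then since $B_{\mathcal{A}(E,E)}$ is a norm-closed convex set that fails to be SOT-closed, Lemma \ref{oldcharacterizationJamesProperty} (which only uses norm-closedness and convexity of the set, not any special property of $\mathcal{K}$) produces the James property for $(E,E)$ — as long as the relevant SOT-closure equals the SOT-sequential closure, which is where one needs a substitute for Lemma \ref{LemmaSequentialClosureOfK(E,F)}.

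The main obstacle, and the step requiring care, is precisely this last point: Lemma \ref{LemmaSequentialClosureOfK(E,F)} was stated for $B_{\mathcal{K}(E,F)}$ and its proof used compactness of restrictions and compositions with projections. For a general norm-closed operator ideal $\mathcal{A}$ one needs the analogous closure-under-restriction and closure-under-composition-with-bounded-operators properties; these are exactly the defining axioms of an operator ideal, so the argument goes through verbatim — a non-compact (equivalently, not-in-$\mathcal{A}$) operator $T$ witnessing failure of SOT-closedness restricts to some separable complemented $Z \subseteq E$ on which $T|_Z \notin \mathcal{A}(Z,E)$ yet $T|_Z \in \overline{B_{\mathcal{A}(Z,E)}}^{SOT}$, and separability of $Z$ upgrades this to a sequence, which one pushes back to $E$ via a projection using the ideal property. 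I would spell this generalization out as a remark or fold it into the proof. Once $B_{\mathcal{A}(E,E)}$ is known to be SOT-closed, the conclusion $\Id_E \in \mathcal{A}(E,E)$ is immediate, contradicting the assumption that $\mathcal{A}$ does not contain the identity on $E$; hence no such $\mathcal{A}$ can exist, which is the statement.
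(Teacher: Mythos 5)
Your overall architecture (contraposition, Theorem \ref{theoremA} killing the James property, Lemma \ref{oldcharacterizationJamesProperty} to get SOT-sequential closedness of $B_{\mathcal{A}(E,E)}$, and then the Lemma \ref{LemmaGeneralizationMujica}-style computation $\Id_E\in\lambda\,\overline{B_{\mathcal{A}(E,E)}}^{\tau_c}\subseteq\lambda\,\overline{B_{\mathcal{A}(E,E)}}^{SOT}=\lambda B_{\mathcal{A}(E,E)}$) matches the paper. But the step you yourself flag as "requiring care" is where the argument genuinely breaks. You propose to pass from SOT-sequential closedness to SOT-closedness by generalizing Lemma \ref{LemmaSequentialClosureOfK(E,F)} from $\mathcal{K}$ to an arbitrary norm-closed ideal $\mathcal{A}$, claiming the proof goes through "verbatim" because the ideal axioms give closure under composition. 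That is not the issue. The proof of Lemma \ref{LemmaSequentialClosureOfK(E,F)} hinges on the fact that non-compactness is \emph{separably determined}: if $T$ is not compact, one extracts a bounded sequence $(x_n)$ with $(Tx_n)$ having no convergent subsequence and restricts to $E_0=\overline{\spann}\{x_n\}$. For a general norm-closed operator ideal $\mathcal{A}$ there is no reason why $T\notin\mathcal{A}(E,E)$ should force $T|_Z\notin\mathcal{A}(Z,E)$ for some separable (complemented) $Z$; the decomposition $T = (T|_Z)\circ P + T\circ(\Id_E-P)$ gives you nothing, since $T\circ(\Id_E-P)$ need not lie in $\mathcal{A}$. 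Separable determination is an extra property of specific ideals (compact, weakly compact, completely continuous, \dots), not a consequence of the ideal axioms, so your substitute lemma is unproved and, as stated, false in the generality you need.

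The paper sidesteps this entirely with an ingredient you are missing: by a result of N.~J.~Kalton, as recorded in \cite{MP}, an infinite-dimensional Banach space on which every operator attains its norm must be \emph{separable}. Once $E$ is separable, the SOT-closure of any bounded subset of $\mathcal{L}(E)$ coincides with its SOT-sequential closure (the elementary observation preceding Lemma \ref{LemmaSequentialClosureOfK(E,F)}), applied directly to $B=\{T\in\mathcal{A}(E,E):\|T\|\leq 1\}$ with no reference to the structure of $\mathcal{A}$ beyond norm-closedness and convexity. With that substitution your proof closes up and coincides with the paper's; without it, the reduction from SOT-closure to SOT-sequential closure for $B_{\mathcal{A}(E,E)}$ is a genuine gap.
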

\begin{proof}
As it is highlighted in \cite[\textsection12, pg. 66]{MP}, due to a result of N.J. Kalton, if such a Banach space $E$ exists, then it must be separable. Therefore, the SOT-closure of the set $B= \{T \in \mathcal{A} (E,E) : \|T \| \leq 1\}$ in $\Lin(E)$ coincides with its SOT-sequential closure. Thus, if every operator on $\Lin(E)$ attains its norm, then $B$ is SOT-closed by Lemma \ref{oldcharacterizationJamesProperty}.
Suppose that $E$ has the bounded $\mathcal{A}$-approximation property. Then, since $\overline{B}^{\tau_c} \subset \overline{B}^{SOT}=B $, we have that $B$ contains a multiple of the identity and therefore  $\mathcal{A}$ contains the identity on $E$.
\end{proof}

Let us conclude the paper by showing the proof of Theorem \ref{theoremC} as a direct implication of Theorem \ref{theoremB} and Proposition \ref{main_tool_for_theorem_C} below. Recall that a Banach space $E$ has the {\it Schur property} if every weakly convergent sequence is norm convergent. It is known that a Banach space $F$ has the Schur property if and only if every weakly compact operator from $E$ into $F$ is compact for any Banach space $E$ (see, for example, \cite[3.2.3, pg. 61]{Pietsch}). Also, it is proved in \cite[Theorem 1]{DFLORT} that a Banach space $F$ has the Schur property if and only if the weak Grothendieck compactness principle holds in $F$, that is, every weakly compact subset of $F$ is contained in the closed convex hull of a weakly null sequence. Afterwards, W.B. Johnson et al., gave an alternative proof in \cite[Theorem 1.1]{JLO} for this result by using the Davis-Figiel-Johnsonn-Pe{\l}czy\'nski factorization theorem \cite{DFJP}. Moreover, it is observed in \cite[Theorem 3.3]{JLO} that a Banach space $F$ has the Schur property if and only if $\mathcal{W}_\infty (E, F) \subset \mathcal{W} (E,F)$ for every Banach space $E$.

The following result will be used as an important tool afterwards. 

\begin{proposition}\label{main_tool_for_theorem_C}
Let $F$ be a Banach space. If $F$ fails to have the Schur property, then there exists a reflexive space with basis $E$ such that $\mathcal{K}(E,F) \neq \mathcal{L}(E,F)$. 
\end{proposition}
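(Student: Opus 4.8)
The plan is to feed a witness of the failure of the Schur property into the Davis--Figiel--Johnson--Pe\l czy\'nski (DFJP) interpolation scheme, producing a reflexive space together with a non-compact operator into $F$, and then to check by hand that this reflexive space carries a Schauder basis. Since $F$ fails the Schur property, it contains a weakly null sequence that stays bounded away from $0$ in norm; by the Bessaga--Pe\l czy\'nski selection principle I pass to a subsequence $(y_n)$ that is basic, with $\|y_n\|\geq\delta>0$ for all $n\in\N$. Write $Y:=\overline{\spann}\{y_n:n\in\N\}$, let $K_b$ be its basis constant, and let $P_m\colon Y\to Y$ denote the partial-sum projections. The goal is then to build a reflexive space with a basis admitting a non-compact operator into $F$.

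For the construction, put $W:=\caconv\{y_n:n\in\N\}\subseteq Y\subseteq F$. Since $(y_n)$ is weakly null, the set $\{\pm y_n:n\in\N\}\cup\{0\}$ is weakly compact, hence so is $W$ by the Krein--\v{S}mulian theorem. Applying the DFJP factorization \cite{DFJP} to $W\subseteq F$ yields a reflexive space $E$ --- the interpolation space of norm $\nn{x}=\big(\sum_{k\geq 1}\|x\|_k^2\big)^{1/2}$, where $\|\cdot\|_k$ is the Minkowski gauge of $2^kW+2^{-k}B_F$ --- together with the formal inclusion $T\colon E\to F$, which is bounded and satisfies $W\subseteq c\,T(B_E)$ for some $c>0$; in particular each $y_n$ lies in $E$ with $\nn{y_n}\leq c$. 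Then $T$ is non-compact, because $T(B_E)\supseteq c^{-1}W\supseteq c^{-1}\{y_n:n\in\N\}$ and $(y_n)$ has no norm-convergent subsequence (any subsequence is weakly null, so a norm limit would be $0$, contradicting $\|y_n\|\geq\delta$). Hence the proposition reduces to showing that this reflexive space $E$ admits a Schauder basis; I claim that $(y_n)$, viewed inside $E$, is one.

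The claim rests on three points. (i) $E\subseteq Y$: if $x\in F$ with $\dist(x,Y)=d>0$, then from $x/t\in 2^kW+2^{-k}B_F$ and $2^kW\subseteq Y$ one gets $d/t\leq 2^{-k}$, so $\|x\|_k\geq d\,2^k$ and $\nn{x}=\infty$. (ii) Each $P_m$ maps $E$ into $E$ with $\nn{P_m}\leq K_b$: indeed $P_m(W)\subseteq W$ (the $P_m$ only shorten $\ell_1$-combinations of the $y_n$, and $W$ is closed), and, using that $W$ is absolutely convex with $0\in W$ together with $\|P_m\|_{Y\to Y}\leq K_b$, a short gauge estimate gives $\|P_mx\|_k\leq K_b\|x\|_k$ for every $k$. (iii) $P_m\to\Id_E$ in the strong operator topology of $E$: for $x\in E\subseteq Y$ one has $\|x-P_mx\|_k\leq 2^k\|x-P_mx\|_F\to 0$ for each fixed $k$, while $\|x-P_mx\|_k\leq(1+K_b)\|x\|_k$ furnishes a square-summable majorant independent of $m$, so $\nn{x-P_mx}\to 0$ by dominated convergence; in particular the $\nn{\cdot}$-closed linear span of $\{y_n:n\in\N\}$ is all of $E$. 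Together, (ii) and (iii) say precisely that $(y_n)$ is a Schauder basis of $E$ with constant at most $K_b$. Taking $e_n:=y_n$, the space $E$ is reflexive with a basis and $T\colon E\to F$ is non-compact, so $\mathcal{K}(E,F)\neq\mathcal{L}(E,F)$.

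The step I expect to be the genuine obstacle is (ii)--(iii): one has to verify that the DFJP interpolation norm is compatible with the coordinate structure of $Y$, i.e.\ that the basis projections remain uniformly bounded and strongly convergent after interpolation. This is where the special form of $W$ as the closed absolutely convex hull of the \emph{basic} sequence $(y_n)$ is essential; for a general weakly compact set the DFJP space need not have the approximation property, let alone a basis. Reflexivity of $E$, boundedness of $T$, and non-compactness of $T$ are, by contrast, immediate consequences of the DFJP theorem and of the separation of $(y_n)$ from $0$.
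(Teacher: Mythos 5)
Your argument is correct, but it reaches the reflexive space with a basis by a genuinely different route than the paper. The paper keeps DFJP as a black box: it forms the weakly compact, non-compact operator $T\in\mathcal{L}(\ell_1,F)$ with $T(e_n)=x_n$, factors it as $S\circ R$ through a reflexive space $E_0$, notes that $S$ is non-compact, extracts a weakly null sequence $(v_n)\subseteq E_0$ whose image under $S$ has no convergent subsequence, passes to a basic subsequence via Bessaga--Pe\l czy\'nski \emph{inside the reflexive space} $E_0$, and takes $E=\overline{\spann}\{v_n\}$; the basis then comes for free and no property of the interpolation norm beyond reflexivity of $E_0$ is used. You instead select the basic sequence already in $F$, run DFJP on $W=\caconv\{y_n\}$, and prove that the DFJP space itself has $(y_n)$ as a Schauder basis by checking that the partial-sum projections satisfy $P_m(W)\subseteq W$, interpolate to uniformly bounded operators on $E$ (your gauge estimate $\|P_mx\|_k\leq K_b\|x\|_k$ is right, noting that $b=x/t-w\in Y$ so $P_m b$ makes sense), and converge strongly by dominated convergence. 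Your steps (i)--(iii) all hold, so the proof is complete; it costs more verification than the paper's subspace trick, but in exchange it shows the sharper fact that the DFJP space built over the closed absolutely convex hull of a basic sequence inherits that sequence as a basis (with the same basis constant), which the paper's argument does not give.
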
 

\begin{proof}
Take $(x_n) \subseteq S_F$ to be a weakly null sequence in $F$, which is not norm null. Since the absolute closed convex hull of $\{x_n: n \in \N\}$ is weakly compact, the operator $T \in \mathcal{L}(\ell_1 ,F)$ given by $T (e_n) := x_n$ for each $n \in \N$ defines a weakly compact operator (which is not compact). By Davis-Figiel-Johnsonn-Pe{\l}czy\'nski factorization theorem \cite{DFJP}, there exists a reflexive space $E_0$ such that $T= S \circ R$, where $R \in \Lin(\ell_1, E_0)$ and $S \in \Lin(E_0, F)$. In particular, notice that $S$ cannot be a compact operator. Now, pick a weakly null sequence $(v_n) \subseteq E_0$ so that $S(v_n)$ does not admit a convergent subsequence. Since $(v_n)$ is weakly null, consider a subsequence which is a basic sequence of $E_0$ (see \cite[Proposition 1.5.4]{Albiac-Kalton}) and denote it again by $(v_n)$. Let $E := \overline{\spann} \{v_n\}_{n \in \N}$. Then, $E$ is a closed reflexive space with basis and $S(v_n)$ does not admit a convergence sequence. So, we conclude that $\mathcal{K}(E, F) \not= \mathcal{L}(E, F)$. 
\end{proof}

Compared to the previously known results in \cite{JLO}, Theorem \ref{charcterizations_Schur} below not only provides a new characterization of the Schur property in terms of norm-attaining operators, but also shows that we can restrict the possible candidates for a domain space as in the below items (f)-(i) by considering only reflexive Banach spaces {\it with basis}.  We refer the reader to Section \ref{preliminaries} for the definitions of the sets $\mathcal{V} (E,F)$ and $\mathcal{W}_\infty(E,F)$. It is immediate to notice that Theorem \ref{theoremC} follows from Theorem \ref{charcterizations_Schur}.

\begin{theorem}\label{charcterizations_Schur}
 Let $F$ be a Banach space. The following statements are equivalent.	
	\begin{itemize}
		\setlength\itemsep{0.3em} 
		\item[(a)] $F$ has the Schur property.
		\item[(b)] $\mathcal{K}(E, F) = \mathcal{L}(E, F)$ for every reflexive space $E$.
		\item[(c)] $\mathcal{W}_\infty (E,F) = \mathcal{L} (E,F)$ for every reflexive space $E$. 
		\item[(d)] $\mathcal{V} (E,F) = \mathcal{L}(E,F)$ for every reflexive space $E$.
		\item[(e)] $\NA(E, F) = \mathcal{L}(E, F)$ for every reflexive space $E$.
		\item[(f)] $\mathcal{K} (G, F) = \mathcal{L} (G, F)$ for every reflexive space $G$ with basis.
		\item[(g)] $\NA (G,F) = \mathcal{L} (G,F)$ for every reflexive space $G$ with basis.
		\item[(h)] $\mathcal{W}_\infty (G, F) = \mathcal{L} (G, F)$ for every reflexive space $G$ with basis.
		\item[(i)] $\mathcal{V}(G,F) = \mathcal{L} (G, F)$ for every reflexive space $G$ with basis.
	\end{itemize}
\end{theorem}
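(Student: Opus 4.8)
The plan is to funnel the nine statements through (a), using a few reductions valid whenever the domain is reflexive. If $E$ is reflexive then: every $T\in\mathcal L(E,F)$ is weakly compact, since $T$ is weak-to-weak continuous and $B_E$ is weakly compact, so $\mathcal L(E,F)=\mathcal W(E,F)$; a completely continuous operator on $E$ is compact, because a bounded sequence in $E$ has a weakly convergent subsequence, which is then sent to a norm-convergent one, so $\mathcal V(E,F)=\mathcal K(E,F)$; a compact operator on $E$ attains its norm (take a maximizing sequence and pass to a weakly convergent subsequence); and, for arbitrary Banach spaces, $\mathcal K(E,F)\subseteq\mathcal W_\infty(E,F)\subseteq\mathcal W(E,F)$, the first inclusion by Grothendieck's description of relatively compact sets and the second because the closed absolutely convex hull of a weakly null sequence is weakly compact by the Krein--\v{S}mulian theorem. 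Hence, for reflexive $E$, $\mathcal K(E,F)\subseteq\{\mathcal W_\infty,\mathcal V,\NA\}(E,F)\subseteq\mathcal L(E,F)$, with $\mathcal V(E,F)=\mathcal K(E,F)$. The forward implications are then immediate: (a)$\Rightarrow$(b) because a space with the Schur property turns weakly compact operators into compact ones, so $\mathcal L(E,F)=\mathcal W(E,F)=\mathcal K(E,F)$ for reflexive $E$; (b) then implies (c), (d), (e) by the displayed inclusions (with (d) in fact equivalent to (b)); and (b), (c), (d), (e) each imply the ``with basis'' version (f), (g), (h), (i) by restricting the class of domains.

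For the converse directions I would close the loop at (a). The implication (f)$\Rightarrow$(a) is exactly the contrapositive of Proposition~\ref{main_tool_for_theorem_C}, which, if $F$ lacks the Schur property, produces a reflexive space $G$ \emph{with a basis} and $\mathcal K(G,F)\neq\mathcal L(G,F)$. Since a Banach space with a basis has the BAP, the pair $(G,F)$ has the BCAP, so Theorem~\ref{theoremB} promotes $\mathcal K(G,F)\neq\mathcal L(G,F)$ to $\NA(G,F)\neq\mathcal L(G,F)$; this gives (g)$\Rightarrow$(a). Likewise (i)$\Rightarrow$(f), because $\mathcal V(G,F)=\mathcal K(G,F)$ for reflexive $G$, hence (i)$\Rightarrow$(a).

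For (c)$\Rightarrow$(a): if $F$ lacks the Schur property, \cite[Theorem~1]{DFLORT} yields a weakly compact $W\subseteq F$ contained in the closed convex hull of no weakly null sequence; applying the Davis--Figiel--Johnson--Pe\l czy\'nski interpolation theorem \cite{DFJP} to $W$ gives a reflexive space $Z$ and $j\in\mathcal L(Z,F)$ with $W\subseteq j(B_Z)$, and $j$ cannot be weakly $\infty$-compact, for otherwise $W$ would lie in the closed absolutely convex hull, hence in the closed convex hull, of a weakly null sequence. Thus $\mathcal W_\infty(Z,F)\neq\mathcal L(Z,F)$ and (c) fails. (Equivalently, one checks that (c) forces $\mathcal W(X,F)=\mathcal W_\infty(X,F)$ for \emph{every} $X$, by running the same factorization on an arbitrary weakly compact operator, and then invokes \cite[Theorem~3.3]{JLO}.)

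The delicate step is (h)$\Rightarrow$(a). The operator $j$ above lives on a reflexive space $Z$ that a priori has no basis, and --- in contrast with Proposition~\ref{main_tool_for_theorem_C} --- for a fixed reflexive $Z$ there is no reason that $\mathcal W_\infty(Z,F)=\mathcal K(Z,F)$, so (h) cannot be routed through (f). I would therefore choose $W$ more carefully: as $F$ lacks the Schur property there is a weakly null basic sequence $(x_n)\subseteq S_F$, and $W$ can be produced as a ``coordinate box'' $\{\sum_n c_nx_n:|c_n|\le d_n\}$ inside $\overline{\mathrm{span}}\{x_n\}$ which is weakly compact yet lies in the closed convex hull of no weakly null sequence (this can be extracted from the proof of \cite[Theorem~1]{DFLORT}); since such a $W$ is invariant under the basis projections, the interpolation space $Z$ inherits the basis $(x_n)$. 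Then $Z$ is reflexive \emph{with a basis}, $j\in\mathcal L(Z,F)\setminus\mathcal W_\infty(Z,F)$ as before, and (h) fails. (Alternatively: keep the separable $Z$, realise it as a quotient of a reflexive space $\widehat Z$ with a basis, and compose $j$ with the quotient map $\widehat Z\twoheadrightarrow Z$.) This closes the circle of equivalences; the point requiring the most care is precisely the construction and analysis of that weakly compact set --- showing it is weakly compact while escaping every weakly null sequence --- which is exactly where the failure of the Schur property is used.
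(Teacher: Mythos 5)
Your overall architecture is sound and most of the individual arrows are fine: the forward implications, the identification $\mathcal{V}(E,F)=\mathcal{K}(E,F)$ for reflexive $E$, closing the loop via (f) $\Rightarrow$ (a) from Proposition \ref{main_tool_for_theorem_C}, and the use of Theorem \ref{theoremB} (through the MAP of a reflexive space with basis) to pass between (f) and (g) all match the paper. Your direct argument for (c) $\Rightarrow$ (a) via \cite{DFLORT} and \cite{DFJP} is a legitimate alternative to the paper's route. The genuine gap is (h) $\Rightarrow$ (a), and the step you call delicate is in fact not delicate: you omitted from your chain of inclusions the fact that $\mathcal{W}_\infty(E,F)\subseteq \mathcal{V}(E,F)$ for \emph{arbitrary} Banach spaces $E$ and $F$, which is \cite[Proposition 3.1]{JLO} and is exactly what the paper invokes. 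With it, (h) $\Rightarrow$ (i) is immediate, and (i) $\Rightarrow$ (g) $\Leftrightarrow$ (f) $\Rightarrow$ (a) finishes the proof with no new construction; the same inclusion also gives (c) $\Rightarrow$ (d) for free, so (c) and (h) fall into the same funnel as everything else.

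As written, your treatment of (h) does not constitute a proof. The claims that the ``coordinate box'' $\{\sum_n c_n x_n : |c_n|\le d_n\}$ is weakly compact, that it lies in the closed convex hull of no weakly null sequence, and above all that the DFJP interpolation space built from it ``inherits the basis $(x_n)$'' are each nontrivial and are only asserted (``can be extracted from the proof''); the last one in particular would require checking that the basis projections act with uniform bound on the interpolation norm and that the interpolation space sits inside $\overline{\spann}\{x_n\}$. The fallback via quotients is not free either: to realise a general separable reflexive $Z$ as a quotient of a reflexive space with a basis you need Zippin's embedding theorem (dualised), which is far heavier machinery than the statement requires. So either supply those verifications in full, or---much better---add the inclusion $\mathcal{W}_\infty\subseteq\mathcal{V}$ to your displayed chain and route (h) exactly as you already route (i).
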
 

\begin{proof} The following diagram holds. 

\vspace{0.2cm}	
	
\[
\begin{tikzcd}
                             & (c) \arrow[d, Rightarrow] \arrow[rd, Rightarrow] &              \\
(b) \arrow[ru, Rightarrow] \arrow[d, no head, Leftarrow] & (d) \arrow[d, Rightarrow]            & (h) \arrow[d, Rightarrow] \\
(a)                 & (e) \arrow[d, Rightarrow]            & (i) \arrow[ld, Rightarrow] \\
(f) \arrow[r, no head, Leftrightarrow]  \arrow[u, Rightarrow]           & (g)                       &              
\end{tikzcd}
\]

\vspace{0.2cm}

\noindent
Indeed, by definition we have that $\mathcal{K}(E,F) \subset \mathcal{W}_\infty (E,F) \subset \mathcal{W} (E,F)$ for any Banach space $E$ and $F$, and it is also known that $\mathcal{K}(E,F) \subset \mathcal{W}_\infty (E,F) \subset \mathcal{V} (E,F)$ (see \cite[Proposition 3.1]{JLO}). Moreover, if $T$ is an element of $\mathcal{V}(E,F)$ with $E$ reflexive, then $T \in \NA (E,F)$ thanks to the weak sequential compactness of $B_E$. 
Thus, it is immediate that (a) $\Longrightarrow$ (b) $\Longrightarrow$ (c) $\Longrightarrow$ (d) $\Longrightarrow$ (e) $\Longrightarrow$ (g) and (c) $\Longrightarrow$ (h) $\Longrightarrow$ (i) $\Longrightarrow$ (g) hold. As a reflexive Banach space with basis has the MAP, (f) $\Longleftrightarrow$ (g) follows from Theorem \ref{theoremB}. Finally, (f) $\Longrightarrow$ (a) is already obtained by Proposition \ref{main_tool_for_theorem_C}. 
\end{proof}

\bigskip

\noindent
\textbf{Acknowledgements.} We would like to thank Jos\'e Rodr\'iguez for suggesting us Definition \ref{definitionpropertystar} and the use of Pfitzner's Theorem to strengthen and simplify part of the content of the paper. We are also grateful to Richard Aron, Gilles Godefroy, Manuel Maestre, and Miguel Mart\'in for fruitful conversations on the topic of the present paper.

\end{document}